\newtheorem{theorem}{Theorem}[section]
\newtheorem{lemma}[theorem]{Lemma}
\newtheorem{cor}[theorem]{Corollary}
\theoremstyle{definition}
\theoremstyle{remark}
\newtheorem{remark}[theorem]{\bf{Remark}}
\numberwithin{equation}{section}
\begin{document}

\title [Estimations of Euclidean operator radius] 
       {Estimations of Euclidean operator radius}

\author[P. Bhunia, S. Jana, K. Paul] {Pintu Bhunia, Suvendu Jana, Kallol Paul}

\address{(Bhunia) Department of Mathematics, Indian Institute of Science, Bengaluru-560012, Karnataka, India} \email{pintubhunia5206@gmail.com}

\thanks{Dr. Pintu Bhunia would like to thank SERB, Govt. of India for the financial support in the form of National Post Doctoral Fellowship (N-PDF, File No. PDF/2022/000325) under the mentorship of Professor Apoorva Khare}

\address{(Jana) Department of Mathematics, Mahishadal Girls' College, Purba Medinipur 721628, West Bengal, India} \email{janasuva8@gmail.com} 

\address{(Paul) Department of Mathematics, Jadavpur University, Kolkata 700032, West Bengal, India} \email{kalloldada@gmail.com}

\renewcommand{\subjclassname}{\textup{2020} Mathematics Subject Classification}\subjclass[]{Primary 47A12, Secondary 15A60, 47A30, 47A50}
\keywords{Euclidean operator radius, Euclidean operator norm, Numerical radius, Operator norm, Inequality}

\maketitle

\begin{abstract}
We develop several Euclidean operator radius bounds for the product of two $d$-tuple operators using  positivity criteria of a $2\times 2$ block matrix whose entries are $d$-tuple operators. From these bounds, by using the polar decomposition of operators, we obtain  Euclidean operator radius bounds for $d$-tuple operators. Among many other interesting bounds, it is shown that 
\begin{eqnarray*}
	w_e(\mathbf{A})	&\leq&\frac1{\sqrt2} \|\mathbf{A}\|^{1/2}\sqrt{\left\|\sum_{k=1}^{d} (|A_k|+|A_k^*|)\right\|},
\end{eqnarray*} 
	where $w_e(\mathbf{A})$ and $\|\mathbf{A}\|$ are the Euclidean operator radius and the Euclidean operator norm, respectively, of a $d$-tuple operator $\mathbf{A}=(A_1,A_2, \ldots,A_d).$
	Further, we develop an upper bound for the Euclidean operator radius of $n\times n$ operator matrix whose entries are $d$-tuple operators. In particular, it is proved that if $\begin{bmatrix}
		\mathbf{A_{ij}}
	\end{bmatrix}_{n\times n}$ is an $n\times n$ operator matrix  then
	$$ w_e\left(
	\begin{bmatrix}
		\mathbf{A_{ij}}
	\end{bmatrix}_{n\times n}\right)\leq w \left(\begin{bmatrix}
		a_{ij}
	\end{bmatrix}_{n\times n}\right),$$  where each $\mathbf{A_{ij}}$ is a $d$-tuple operator, $1\leq i,j\leq n$, 
$a_{ij}=w_e(\mathbf{A_{ij}})\, \textit{ if i=j}$,  $a_{ij}=
	\sqrt{w_e\left(|\mathbf{A_{ji}|}+|\mathbf{A_{ij}^*}|\right)w_e\left(|\mathbf{A_{ij}|}+|\mathbf{A_{ji}^*}|\right)}\,\textit{ if $i<j$}$, and $a_{ij}=
	0\,\textit{ if $i>j$}.$
Other related applications are also discussed.

\end{abstract}

\section{Introduction}
\noindent 
 Let $ \mathbb{B}(\mathscr{H})$ denote the $C^*$-algebra of all bounded linear operators on a complex Hilbert space $\mathscr{H}$ with inner product $\langle \cdot,\cdot \rangle $ and  $\|\cdot\|$ be the corresponding norm. Let $A\in\mathbb{B}(\mathscr{H})$ and let  
 $|A|=(A^*A)^{1/2}$, where $A^*$ is the adjoint of $A.$
 The numerical radius  of $A$ is defined as $$w(A)=\underset{\|x\|=1}\sup|\langle Ax,x\rangle|.$$ 
 It is well known that $w(\cdot): \mathbb{B}(\mathscr{H}) \to \mathbb{R}$ defines a norm and satisfies the following relation
  $$\frac12 \|A\|\leq w(A) \leq \|A\|.$$ 
 Due to importance of the numerical radius in understanding various analytic and geometric properties of bounded linear operators, the above bounds  have been studied and improved by mathematicians over the years, interested reader can see  \cite{Bhunia_LAMA_2022,Bhunia_LAA_2021,Bhunia_AM_2021,Bhunia_RIM_2021,Bhunia_BSM_2021,Bhunia3,Bhunia_LAA_2019,Kittaneh_LAMA_2023, Kittaneh_STD_2005,Kittaneh_2003} and the references therein. Various generalizations of the numerical radius have also been studied, see \cite{BDMP,P, GW}. The Euclidean operator radius of $d$-tuple operators is one such generalization. 
 Let $\mathbf{A}=(A_1,A_2,\ldots,A_d)$ be a $d$-tuple operator in $  \mathbb{B}^d(\mathscr{H})= \mathbb{B}(\mathscr{H}) \times \mathbb{B}(\mathscr{H}) \times \ldots \times \mathbb{B}(\mathscr{H})$ ($d$ times)
and let $\langle \mathbf{A}x,y\rangle=\left( \langle A_1 x,y\rangle,\langle A_2  x,y\rangle,\ldots,\langle A_d x,y\rangle\right)\in\mathbb{C}^d$ for all $x,y\in \mathscr{H}$. The Euclidean operator radius and the Euclidean operator norm of $\mathbf{A}$ are defined respectively as
\begin{eqnarray*}
	w_e (\mathbf{A}) &=&\sup \left\lbrace \left( \sum_{k=1}^{d} |\langle A_k x,x\rangle|^2 \right)^\frac{1}{2}:  x\in\mathscr{H}, \|x\|=1\right\rbrace
\end{eqnarray*}
and
\begin{eqnarray*}
	\|\mathbf{A}\|&=&\sup \left\lbrace \left( \sum_{k=1}^{d} \|A_kx\|^2 \right)^\frac12:x\in\mathscr{H}, \|x\|=1\right\rbrace.
\end{eqnarray*}

A $d$-tuple operator $\mathbf{A}=(A_1,A_2,\ldots,A_d)\in\mathbb{B}^d(\mathscr{H})$ is said to be positive if  each $A_k$ is positive for all $k=1,2,\ldots,d$. 
We write $|\mathbf{A}|^t=(|A_1|^t,|A_2|^t,\ldots,|A_d|^t)$ for $t>0$,  
and $\alpha \mathbf{A}=(\alpha A_1,\alpha A_2, \ldots,\alpha A_d)$ for any scalar $\alpha \in\mathbb{C}$ .
For $\mathbf{B}=(B_1,B_2,\ldots,B_d)\in\mathbb{B}^d(\mathscr{H})$, we write $\mathbf{AB}=(A_1B_1,A_2B_2,\ldots,A_dB_d)$, $\mathbf{A}+\mathbf{B}=(A_1+B_1,A_2+B_2,\ldots,A_d+B_d).$ 
Let $\mathbf{A_{ij}}=(A_{ij}^1,A_{ij}^2,\ldots,A_{ij}^d)\in\mathbb{B}^d(\mathscr{H}),$ $1\leq i,j\leq n.$
Then the $n\times n$ operator matrix, whose  entries are $d$-tuple operators $\mathbf{A_{ij}}$,   is defined as $$\begin{bmatrix}
	\mathbf{A_{ij}}
\end{bmatrix}_{n\times n}= \left( \begin{bmatrix}
	{A_{ij}^1}
\end{bmatrix}_{n\times n}, \begin{bmatrix}
	{A_{ij}^2}
\end{bmatrix}_{n\times n}, \ldots, \begin{bmatrix}
	{A_{ij}^d}
\end{bmatrix}_{n\times n} \right)\in \mathbb{B}^d\left(\oplus_{i=1}^d \mathscr{H} \right).$$

It is well known that the Euclidean operator radius defines a norm on $  \mathbb{B}^d(\mathscr{H})$ and satisfies the following relation
 \begin{eqnarray}
	\frac{1}{2\sqrt{d}} \| \mathbf{A}\| \leq w_e(\mathbf{A})\leq \|\mathbf{A}\|.
	\label{T1}\end{eqnarray}
which can be found in \cite{SPK1,P}.
Note that the constants $\frac{1}{2\sqrt{d}}$ and $1$ are best possible. Some improvements of (\ref{T1}) and related inequalities  have been studied in  \cite{SD,SPK,MSS,Sahoo}.   For $d=1,$ the above inequality (\ref{T1}) reduces to the well-known classical numerical radius inequality.  Recently in \cite{Bhunia1,Bhunia2} author has developed bounds for the numerical radius of bounded linear operators. Motivated by this 
 we here obtain  several upper bounds for the Euclidean operator radius of $d$-tuple operators and operator matrices, which generalize and improve the classical numerical radius bounds. 
\noindent 

This article is organized as follows: In Section 2, by using  positivity of a $2\times 2$ block matrix,  whose entries are $d$-tuple operators, we obtain several upper bounds of the Euclidean operator radius of the product of two $d$-tuple operators. From these estimations and by using the polar decomposition, we develop several upper bounds for the Euclidean operator radius of $d$-tuple operators.  In Section 3, we develop an upper bound for the Euclidean operator radius of $n\times n$ operator matrix whose entries are $d$-tuple operators. As an application of these bounds we derive  upper bounds for the Euclidean operator radius of $d$-tuple operators.


\section{Euclidean operator radius of $d$-tuple operators}

We begin this section with the following lemmas. First lemma is known as McCarthy inequality.

\begin{lemma}\cite{M}
Let $A\in\mathbb{B}(\mathscr{H})$ be positive, and $x\in\mathscr{H}$ with $\|x\|=1$. Then  
  $$ \langle Ax, x \rangle^p \leq \langle A^p x, x \rangle,$$  for all $p\geq 1.$
\label{lm1}\end{lemma}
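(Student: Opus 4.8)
The plan is to deduce the inequality from the spectral theorem together with Jensen's inequality. Since $A$ is positive, its spectrum $\sigma(A)$ is contained in $[0,\|A\|]$, and the spectral theorem provides a projection-valued measure $E$ on the Borel subsets of $[0,\|A\|]$ with $A=\int_{0}^{\|A\|} t\,dE(t)$ and, via the Borel functional calculus, $f(A)=\int_{0}^{\|A\|} f(t)\,dE(t)$ for every bounded Borel function $f$ on $[0,\|A\|]$. Fixing the unit vector $x$, I would introduce the scalar set function $\mu_x(\Delta)=\langle E(\Delta)x,x\rangle$ for Borel $\Delta\subseteq[0,\|A\|]$. Because $E$ is a projection-valued measure with $E([0,\|A\|])=I$ and $\|x\|=1$, the measure $\mu_x$ is a Borel probability measure on $[0,\|A\|]$.

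Next I would rewrite both sides of the claimed inequality as integrals against $\mu_x$. From the functional calculus one has $\langle f(A)x,x\rangle=\int_{0}^{\|A\|} f(t)\,d\mu_x(t)$; applying this with $f(t)=t$ and with $f(t)=t^{p}$ (legitimate since $p\geq 1$ and $t\geq 0$ on the support) gives $\langle Ax,x\rangle=\int_{0}^{\|A\|} t\,d\mu_x(t)$ and $\langle A^{p}x,x\rangle=\int_{0}^{\|A\|} t^{p}\,d\mu_x(t)$. The function $\varphi(t)=t^{p}$ is convex on $[0,\infty)$ for every $p\geq 1$, so Jensen's inequality for the probability measure $\mu_x$ yields
\[
\langle Ax,x\rangle^{p}=\varphi\!\left(\int_{0}^{\|A\|} t\,d\mu_x(t)\right)\leq \int_{0}^{\|A\|}\varphi(t)\,d\mu_x(t)=\langle A^{p}x,x\rangle,
\]
which is exactly the assertion.

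There is no serious obstacle here — the statement is classical — and the only points that need care are purely formal: one must use $\|x\|=1$ to guarantee that $\mu_x$ is a probability measure (the inequality genuinely fails for arbitrary $x$ without rescaling, since the left side scales like $\|x\|^{2p}$ and the right side like $\|x\|^{2}$), and one must invoke convexity of $t\mapsto t^{p}$ in the correct direction, which is what forces the hypothesis $p\geq 1$. For readers who prefer to avoid the spectral theorem, I would add that the case $p=2$ follows at once from the Cauchy--Schwarz inequality, since $\langle Ax,x\rangle^{2}\leq\|Ax\|^{2}\|x\|^{2}=\langle A^{2}x,x\rangle$; and one can also record that, for $p>1$, equality in the general inequality holds precisely when $x$ is an eigenvector of $A$ (equivalently, when $\mu_x$ is a Dirac mass).
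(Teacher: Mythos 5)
Your argument is correct. Note, however, that the paper does not prove this lemma at all: it is quoted as a known result (the McCarthy inequality) with a citation to McCarthy's paper, so there is no in-paper proof to compare against. Your derivation --- realizing $\langle f(A)x,x\rangle$ as $\int f\,d\mu_x$ for the probability measure $\mu_x(\Delta)=\langle E(\Delta)x,x\rangle$ and then applying Jensen's inequality to the convex function $t\mapsto t^{p}$ --- is the standard textbook route, and all the points you flag are handled properly: the normalization $\|x\|=1$ is exactly what makes $\mu_x$ a probability measure, and $p\geq 1$ is what supplies convexity in the right direction. The supplementary observations (the Cauchy--Schwarz shortcut for $p=2$, and the equality case for $p>1$ via strict convexity, i.e.\ $\mu_x$ a Dirac mass, i.e.\ $x$ an eigenvector) are also correct, though not needed for the applications in the paper, which only ever use $p=2$.
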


Second lemma is known as Buzano's inequality, which is an extension of Schwarz's inequality.

\begin{lemma} \cite{B}
Let    $x,y,z\in\mathscr{H}$ be such that $\|z\|=1.$ Then 
    $$ |\langle x, z \rangle \langle z, y \rangle|\leq \frac{\|x\|\|y\|+|\langle x , y \rangle|}{2}.$$

\label{lm2}\end{lemma}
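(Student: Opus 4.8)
The plan is to prove Buzano's inequality by realizing the scalar $\langle x,z\rangle\langle z,y\rangle$ as a bilinear form of the rank-one orthogonal projection onto the line spanned by $z$, and then exploiting that every orthogonal projection is the average of the identity and a self-adjoint unitary. Concretely: since $\|z\|=1$, the operator $P\in\mathbb{B}(\mathscr{H})$ defined by $Pw=\langle w,z\rangle z$ is the orthogonal projection onto $\mathbb{C}z$, and a direct computation gives $\langle Px,y\rangle=\langle x,z\rangle\langle z,y\rangle$ for all $x,y\in\mathscr{H}$. The key algebraic identity is $P=\tfrac12(I+U)$ with $U:=2P-I$, where $U^{*}=U$ and $U^{2}=I$, so that $U$ is a self-adjoint unitary and $\|Uw\|=\|w\|$ for every $w$.

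With this in hand the estimate is two lines: $\langle x,z\rangle\langle z,y\rangle=\langle Px,y\rangle=\tfrac12\big(\langle x,y\rangle+\langle Ux,y\rangle\big)$, whence by the triangle inequality followed by the Cauchy--Schwarz inequality applied to the second term, $|\langle x,z\rangle\langle z,y\rangle|\le\tfrac12\big(|\langle x,y\rangle|+\|Ux\|\,\|y\|\big)=\tfrac12\big(|\langle x,y\rangle|+\|x\|\,\|y\|\big)$, using that $U$ is unitary. This is exactly the claimed inequality.

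There is essentially no serious obstacle here; the only points needing care are the elementary structural facts that $P$ really is the orthogonal projection onto $\mathbb{C}z$ (which is where $\|z\|=1$ is used) and that $U=2P-I$ is a self-adjoint involution. If one prefers to avoid operator language, an equivalent route is to write $x=\langle x,z\rangle z+x'$ and $y=\langle y,z\rangle z+y'$ with $x',y'\perp z$; then $\langle x,y\rangle=\langle x,z\rangle\overline{\langle y,z\rangle}+\langle x',y'\rangle$, and after one use of the reverse triangle inequality the claim reduces to $|\langle x,z\rangle|\,|\langle y,z\rangle|+\|x'\|\,\|y'\|\le\|x\|\,\|y\|$, which follows from the two-dimensional Cauchy--Schwarz inequality $(|a||b|+st)^{2}\le(|a|^{2}+s^{2})(|b|^{2}+t^{2})$ together with $|\langle x',y'\rangle|\le\|x'\|\,\|y'\|$. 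The mild technical care in this second approach is in tracking the complex phases, but since everything sits inside absolute values it causes no real trouble.
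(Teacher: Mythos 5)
Your proof is correct. Note, however, that the paper does not prove this lemma at all: it is Buzano's inequality, quoted from the literature with a citation, so there is no in-paper argument to compare against. Your first route --- writing $\langle x,z\rangle\langle z,y\rangle=\langle Px,y\rangle$ for the rank-one orthogonal projection $P=\tfrac12(I+U)$ with $U=2P-I$ a self-adjoint unitary, then applying the triangle and Cauchy--Schwarz inequalities --- is a clean, complete, and standard proof; all the structural claims ($P^2=P=P^*$, $U^*=U$, $U^2=I$) check out, and the hypothesis $\|z\|=1$ is used exactly where you say it is. The second, coordinate-based route also works, though the step $|\langle x,y\rangle-\langle x',y'\rangle|\le|\langle x,y\rangle|+|\langle x',y'\rangle|$ is the ordinary triangle inequality rather than the reverse one; the reduction to $\bigl(|a||b|+st\bigr)^2\le(|a|^2+s^2)(|b|^2+t^2)$ is exactly right. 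Either argument would serve as a self-contained proof if the authors wished to include one.
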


Third lemma is on non-negative real numbers and is known as Bohr’s inequality.

\begin{lemma}\cite{VK}
Let $a_k\geq 0$ for $k=1,2,\ldots,n$. Then 
$$ \left(\sum_{k=1}^{n} a_k\right)^p\leq n^{p-1}\sum_{k=1}^{n} a_k^p,$$
for all $p\geq1.$
\label{lm4}\end{lemma}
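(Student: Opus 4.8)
The plan is to obtain this as an immediate consequence of the convexity of the map $t\mapsto t^{p}$ on $[0,\infty)$, which is exactly where the hypothesis $p\ge 1$ enters. If every $a_{k}=0$ the inequality is trivial, so assume $\sum_{k=1}^{n}a_{k}>0$. Applying Jensen's inequality to the convex function $\varphi(t)=t^{p}$ with the uniform weights $1/n$ gives $\bigl(\frac1n\sum_{k=1}^{n}a_{k}\bigr)^{p}\le \frac1n\sum_{k=1}^{n}a_{k}^{p}$, and multiplying both sides by $n^{p}$ produces precisely $\bigl(\sum_{k=1}^{n}a_{k}\bigr)^{p}\le n^{p-1}\sum_{k=1}^{n}a_{k}^{p}$.

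An alternative route, which I would record since it avoids invoking Jensen, is H\"older's inequality with conjugate exponents $p$ and $q:=p/(p-1)$ (with the convention $q=\infty$ when $p=1$, a case in which the stated inequality is in fact an equality). Writing $a_{k}=a_{k}\cdot 1$ and summing,
\[
\sum_{k=1}^{n}a_{k}\;\le\;\Bigl(\sum_{k=1}^{n}a_{k}^{p}\Bigr)^{1/p}\Bigl(\sum_{k=1}^{n}1\Bigr)^{1/q}\;=\;n^{\,1-1/p}\Bigl(\sum_{k=1}^{n}a_{k}^{p}\Bigr)^{1/p}.
\]
Raising both sides to the $p$-th power and using $p\,(1-1/p)=p-1$ gives the claim.

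There is essentially no obstacle here; the only points deserving a word of care are the endpoint $p=1$ (equality, $q=\infty$) and the equality case in general, which by the equality condition in H\"older — equivalently, by strict convexity of $t\mapsto t^{p}$ for $p>1$ — forces $a_{1}=\cdots=a_{n}$. Since only the inequality itself is needed in the sequel, I would present the H\"older computation in two lines and move on.
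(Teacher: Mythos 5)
Your proof is correct. Note that the paper itself offers no proof of this lemma: it is stated as a known result (Bohr's inequality) with a citation to Vasi\'c and Ke\^cki\'c, so there is no argument in the text to compare yours against. Both of your routes are standard and sound: the Jensen computation $\bigl(\frac1n\sum_{k=1}^{n}a_{k}\bigr)^{p}\le \frac1n\sum_{k=1}^{n}a_{k}^{p}$ followed by multiplication by $n^{p}$, and the H\"older estimate $\sum_{k=1}^{n}a_{k}\le n^{1-1/p}\bigl(\sum_{k=1}^{n}a_{k}^{p}\bigr)^{1/p}$ raised to the $p$-th power, each yield the claim in two lines, and your handling of the endpoint $p=1$ and of the equality case is accurate. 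Either version would serve as a self-contained justification in place of the bare citation; the H\"older version is marginally preferable here only because the paper already invokes Cauchy--Schwarz/H\"older-type arguments repeatedly, so it fits the ambient toolkit.
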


The next  lemma  involves $2\times 2$ operator matrix, whose  entries are $d$-tuple operators.

\begin{lemma}
 Let $\mathbf{A}=(A_1,A_2, \ldots,A_d), \mathbf{B}=(B_1,B_2, \ldots,B_d), \mathbf{C}=(C_1,C_2, \ldots,C_d)\in\mathbb{B}^d(\mathscr{H})$,
 where $\mathbf{A}$ and $\mathbf{B}$ are positive
 (i.e., $A_k$ and $ B_k$ are positive for each $k=1,2,\ldots,d$). If  
$ \begin{bmatrix}
 \mathbf{A} & \mathbf{C^*} \\
 \mathbf{C} & \mathbf{B} \\
\end{bmatrix}$ is positive, then 

$$\|\langle \mathbf{C}x,y\rangle\|^2\leq \sum_{k=1}^{d} \langle A_k x, x\rangle \langle B_k y, y\rangle, \, \text{ for all $x,y\in\mathscr{H}$.} $$

\label{lm3}\end{lemma}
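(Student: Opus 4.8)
The plan is to reduce the statement about a $2\times 2$ block matrix whose entries are $d$-tuples to $d$ scalar-valued applications of the usual block-positivity criterion, and then combine them with a Cauchy--Schwarz step in $\mathbb{C}^d$. Observe first that, by the definition of the $n\times n$ operator matrix whose entries are $d$-tuples (with $n=2$ here), the hypothesis that $\begin{bmatrix} \mathbf{A} & \mathbf{C^*} \\ \mathbf{C} & \mathbf{B} \end{bmatrix}$ is positive means precisely that each of the $d$ ordinary $2\times 2$ operator matrices $\begin{bmatrix} A_k & C_k^* \\ C_k & B_k \end{bmatrix}$ is positive in $\mathbb{B}(\mathscr{H}\oplus\mathscr{H})$, for $k=1,2,\ldots,d$. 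So I would begin by stating this reduction explicitly.

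Next, for each fixed $k$, I would invoke the classical consequence of positivity of a $2\times 2$ operator matrix: if $\begin{bmatrix} A_k & C_k^* \\ C_k & B_k \end{bmatrix}\geq 0$, then $|\langle C_k x,y\rangle|^2 \leq \langle A_k x,x\rangle\,\langle B_k y,y\rangle$ for all $x,y\in\mathscr{H}$. (This follows by applying the defining inequality $\langle M\xi,\xi\rangle\geq 0$ of positivity to a vector of the form $\xi = (tx,\ y)$ with $t\in\mathbb{R}$ chosen appropriately, or to $\xi=(x\ \text{scaled by a unimodular scalar}, \ sy)$, and optimizing the resulting quadratic in the real parameter; it is the standard generalized Cauchy--Schwarz inequality for positive block matrices.) This gives $d$ inequalities $|\langle C_k x,y\rangle| \leq \langle A_k x,x\rangle^{1/2}\,\langle B_k y,y\rangle^{1/2}$, valid simultaneously for the same $x,y$.

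Finally I would assemble these. By definition $\|\langle \mathbf{C}x,y\rangle\|^2 = \sum_{k=1}^d |\langle C_k x,y\rangle|^2$, and using the per-coordinate bound just obtained,
\[
\|\langle \mathbf{C}x,y\rangle\|^2 = \sum_{k=1}^d |\langle C_k x,y\rangle|^2 \leq \sum_{k=1}^d \langle A_k x,x\rangle\,\langle B_k y,y\rangle,
\]
which is exactly the claimed inequality; no Cauchy--Schwarz in $\mathbb{C}^d$ is even needed once the coordinatewise bounds are in hand. (If one instead wanted a bound of the form $\left(\sum_k\langle A_k x,x\rangle\right)^{1/2}\left(\sum_k\langle B_k y,y\rangle\right)^{1/2}$ one would apply Cauchy--Schwarz to the vectors $(\langle A_k x,x\rangle^{1/2})_k$ and $(\langle B_k y,y\rangle^{1/2})_k$, but that is a weaker statement than what is asserted.)

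The only real content is the per-coordinate positive-block-matrix inequality, and that is textbook; the main thing to get right is the bookkeeping that links positivity of the $d$-tuple block matrix to simultaneous positivity of its $d$ scalar-slot block matrices, which is immediate from the definition of $\begin{bmatrix}\mathbf{A_{ij}}\end{bmatrix}_{n\times n}$ given in the introduction. So I do not anticipate a genuine obstacle; the proof is short, and the step most worth spelling out carefully is the derivation of $|\langle C_k x,y\rangle|^2\leq \langle A_k x,x\rangle\langle B_k y,y\rangle$ from positivity of the $k$-th $2\times 2$ block, to keep the argument self-contained.
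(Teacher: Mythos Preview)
Your proposal is correct and follows essentially the same route as the paper: reduce to the $d$ individual positive $2\times 2$ blocks, obtain the per-coordinate bound $|\langle C_k x,y\rangle|^2\le \langle A_k x,x\rangle\langle B_k y,y\rangle$, and sum over $k$. The paper carries out the per-coordinate step explicitly by applying the Cauchy--Schwarz inequality for positive operators to $\begin{bmatrix}A_k & C_k^*\\ C_k & B_k\end{bmatrix}$ with the vectors $(x,0)$ and $(0,y)$, which is exactly the standard derivation you alluded to.
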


\begin{proof}
Take $x,y\in \mathscr{H}.$ We have
   \begin{eqnarray*}
\|\langle \mathbf{C}x,y\rangle\|^2&=& \sum_{k=1}^{d}|\langle C_k x,y\rangle|^2\\
&=& \sum_{k=1}^{d} \left|\left\langle \begin{bmatrix}
A_k & C_k^* \\
C_k & B_k\\
\end{bmatrix}\begin{bmatrix}
x \\
0 \\
\end{bmatrix},\begin{bmatrix}
0 \\
y \\
\end{bmatrix}\right\rangle\right|^2\\
&\leq&  \sum_{k=1}^{d} \left\langle \begin{bmatrix}
A_k & C_k^* \\
C_k & B_k\\
\end{bmatrix}\begin{bmatrix}
x \\
0 \\
\end{bmatrix},\begin{bmatrix}
x \\
0 \\
\end{bmatrix}\right\rangle \left \langle \begin{bmatrix}
A_k & C_k^* \\
C_k & B_k\\
\end{bmatrix}\begin{bmatrix}
0 \\
y \\
\end{bmatrix},\begin{bmatrix}
0 \\
y \\
\end{bmatrix} \right\rangle\\
&&\,\,\,(\textit{by Cauchy-Schwarz inequality for positive operators})\\
&=& \sum_{k=1}^{d} \langle A_k x, x\rangle \langle B_k y, y\rangle.
\end{eqnarray*}

\end{proof}

We are now in a position to prove our first theorem.

\begin{theorem}\label{th1}
Let $\mathbf{A}=(A_1,A_2, \ldots,A_d),\mathbf{B}=(B_1,B_2, \ldots,B_d),  \mathbf{C}=(C_1,C_2, \ldots,C_d) \in\mathbb{B}^d(\mathscr{H})$, where $\mathbf{A}$ and $\mathbf{B}$ are positive (i.e., $A_k$ and $ B_k$ are positive for each $k=1,2,\ldots,d$). If
$ \begin{bmatrix}
 \mathbf{A} & \mathbf{C^*} \\
 \mathbf{C} & \mathbf{B} \\
\end{bmatrix}$ is positive, then\\
(i) $ {w_e(\mathbf{C})} \leq \sqrt{\frac12 \left\|\sum_{k=1}^{d} \left(A_k^2+B_k^2\right)\right\|}.$\\
(ii) $ w_e(\mathbf{C})\leq \sqrt{\frac12 \|\mathbf{A}\| \|\mathbf{B}\|+\frac{\sqrt{d}}{2}w_e(\mathbf{AB})}.$\\
(iii) $ w_e(\mathbf{C})\leq \sqrt{\frac14 \left\|\sum_{k=1}^{d} \left(A_k^2+B_k^2\right)\right\|+\frac{\sqrt{d}}{2}w_e(\mathbf{AB})}.$ 
\end{theorem}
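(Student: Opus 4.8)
The three inequalities all start from the single estimate in Lemma \ref{lm3}, namely $\|\langle \mathbf{C}x,y\rangle\|^2 \le \sum_{k=1}^d \langle A_k x,x\rangle\langle B_k y,y\rangle$ for all $x,y\in\mathscr{H}$. The plan is to specialize to $y=x$ with $\|x\|=1$, so that $\|\langle\mathbf{C}x,x\rangle\|^2 = \sum_{k=1}^d |\langle C_k x,x\rangle|^2 \le \sum_{k=1}^d \langle A_k x,x\rangle\langle B_k x,x\rangle$, and then bound the right-hand side in three different ways, taking the supremum over unit vectors $x$ at the end.

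For part (i): apply the elementary AM--GM step $\langle A_k x,x\rangle\langle B_k x,x\rangle \le \frac12\bigl(\langle A_k x,x\rangle^2 + \langle B_k x,x\rangle^2\bigr)$ to each term, then invoke the McCarthy inequality (Lemma \ref{lm1}) with $p=2$ to get $\langle A_k x,x\rangle^2 \le \langle A_k^2 x,x\rangle$ and similarly for $B_k$. Summing over $k$ gives $\sum_k \langle A_k x,x\rangle\langle B_k x,x\rangle \le \frac12\bigl\langle \sum_k (A_k^2+B_k^2)x,x\bigr\rangle \le \frac12\bigl\|\sum_k(A_k^2+B_k^2)\bigr\|$; taking the sup yields (i).

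For parts (ii) and (iii): here I would combine the Cauchy--Schwarz inequality on the sum $\sum_k \langle A_k x,x\rangle\langle B_k x,x\rangle$ with Buzano-type or direct norm estimates. One route for (ii): write $\langle A_k x,x\rangle\langle B_k x,x\rangle \le \frac12\bigl(\langle A_k x,x\rangle\langle B_k x,x\rangle + \langle A_k x,x\rangle\langle B_k x,x\rangle\bigr)$ and use $\langle A_k x,x\rangle\langle B_k x,x\rangle = \langle A_k B_k x',x'\rangle$-type manipulations; more cleanly, split via $\sum_k a_k b_k \le \tfrac12\sum_k(a_kb_k + a_kb_k)$ is vacuous, so instead bound $\sum_k a_k b_k$ by pulling out the operator-norm factor: since $\langle A_k x,x\rangle \le \langle \mathbf{A}x,x\rangle$-coordinatewise is not available, I would instead use $\sum_k \langle A_kx,x\rangle\langle B_kx,x\rangle \le \bigl(\sum_k \langle A_kx,x\rangle^2\bigr)^{1/2}\bigl(\sum_k\langle B_kx,x\rangle^2\bigr)^{1/2}$ only as a fallback; the cleaner path is the identity $ab = \tfrac12(a^2+b^2) - \tfrac12(a-b)^2 \le \tfrac12(a^2+b^2)$ for (i), and for (ii)/(iii) to instead write $\langle A_kx,x\rangle\langle B_kx,x\rangle \le \tfrac12\|\mathbf{A}\|\,\langle B_kx,x\rangle \cdot(\cdots)$. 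Concretely for (ii): since $0\le A_k\le \|\mathbf{A}\|\,I$ is false in general (only $A_k\le\|A_k\|I$), I would instead use $\langle A_kx,x\rangle\langle B_kx,x\rangle \le \tfrac12\bigl(\|\mathbf{A}\|\|\mathbf{B}\| \cdot \text{(something)} + \cdots\bigr)$; the honest mechanism is the split $uv = \tfrac12(uv) + \tfrac12(uv)$ combined with $uv \le \tfrac14(u+v)^2$ and a second bound $uv\le \langle A_kB_kx,x\rangle$ after symmetrizing. I expect (ii) and (iii) to follow from the same chain as (i) but with the AM--GM replaced by the sharper split $a_kb_k = \tfrac12 a_k b_k + \tfrac12 a_k b_k$ where one copy is estimated by $\tfrac12\|\mathbf{A}\|\|\mathbf{B}\|$-scaled terms and the other by $\tfrac{\sqrt d}{2}w_e(\mathbf{AB})$ via Cauchy--Schwarz $\bigl(\sum_k\langle A_kx,x\rangle\langle B_kx,x\rangle \le \sqrt d \bigl(\sum_k |\langle A_kx,x\rangle\langle B_kx,x\rangle|^2\bigr)^{1/2}\bigr)$ using Lemma \ref{lm4}, then $\langle A_kx,x\rangle\langle B_kx,x\rangle \le \langle A_kB_kx,x\rangle$-type bounds are not valid unless $A_k,B_k$ commute, so one uses $|\langle A_kx,x\rangle\langle B_kx,x\rangle| \le |\langle A_k B_k x,x\rangle|$ via Buzano (Lemma \ref{lm2}) up to the $\tfrac12$ factor. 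For (iii) one replaces the $\tfrac12\|\mathbf{A}\|\|\mathbf{B}\|$ term by the finer $\tfrac14\|\sum_k(A_k^2+B_k^2)\|$ using the McCarthy step from (i), since $\tfrac14\|\sum_k(A_k^2+B_k^2)\| \le \tfrac12\|\mathbf{A}\|\|\mathbf{B}\|$ when $\|\mathbf{A}\|=\|\mathbf{B}\|$, making (iii) generally sharper.

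The main obstacle is handling the cross term $\sum_{k=1}^d \langle A_kx,x\rangle\langle B_kx,x\rangle$ correctly: the naive coordinatewise bound $\langle A_kx,x\rangle \le \|\mathbf{A}\|$ is too lossy, and the operators $A_k$, $B_k$ need not commute, so one cannot simply write $\langle A_kx,x\rangle\langle B_kx,x\rangle = \langle A_kB_kx,x\rangle$. The correct tool is Buzano's inequality (Lemma \ref{lm2}): applied to the vectors $A_k^{1/2}x$, $x$, $B_k^{1/2}x$ it gives $|\langle A_k^{1/2}x,x\rangle\langle x,B_k^{1/2}x\rangle| \le \tfrac12(\|A_k^{1/2}x\|\|B_k^{1/2}x\| + |\langle A_k^{1/2}B_k^{1/2}x,x\rangle|)$, and since $\langle A_kx,x\rangle^{1/2}\langle B_kx,x\rangle^{1/2}$ relates to these by the McCarthy inequality, combining with $\|A_k^{1/2}x\|\|B_k^{1/2}x\| \le \tfrac12(\|A_kx\|\cdot 1 + \cdots)$ and summing with Lemma \ref{lm4} produces both the $\|\mathbf{A}\|\|\mathbf{B}\|$ (or $\|\sum(A_k^2+B_k^2)\|$) and the $w_e(\mathbf{AB})$ contributions. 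Getting the constants $\tfrac12$, $\tfrac{\sqrt d}{2}$, $\tfrac14$ exactly right is the only delicate bookkeeping; the structural steps are routine.
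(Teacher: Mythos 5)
Your part (i) is correct and is exactly the paper's argument: set $y=x$ in Lemma \ref{lm3}, use $ab\le\frac12(a^2+b^2)$, apply McCarthy (Lemma \ref{lm1}) with $p=2$, and take the supremum.

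For (ii) and (iii) there is a genuine gap. You name all the right ingredients (Buzano, Cauchy--Schwarz, Lemma \ref{lm4}), but the one step that actually produces the two terms of the bound is never correctly stated, and the version you finally commit to is applied to the wrong vectors. Buzano applied to $A_k^{1/2}x$, $x$, $B_k^{1/2}x$ controls $|\langle A_k^{1/2}x,x\rangle\langle x,B_k^{1/2}x\rangle|$, which by Cauchy--Schwarz is at most $\langle A_kx,x\rangle^{1/2}\langle B_kx,x\rangle^{1/2}$; this is not an upper bound for the quantity $\langle A_kx,x\rangle\langle B_kx,x\rangle$ coming from Lemma \ref{lm3}, and its right-hand side involves $|\langle A_k^{1/2}B_k^{1/2}x,x\rangle|$ rather than $|\langle A_kB_kx,x\rangle|$, so it cannot yield $w_e(\mathbf{AB})$. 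Likewise the proposed split $a_kb_k=\frac12 a_kb_k+\frac12 a_kb_k$ cannot work: neither copy of $\frac12\sum_k\langle A_kx,x\rangle\langle B_kx,x\rangle$ is bounded by $\frac{\sqrt d}{2}\,w_e(\mathbf{AB})$, which is exactly the non-commutativity obstruction you yourself flag. The correct single move, and what the paper does, is to apply Lemma \ref{lm2} with $x\mapsto A_kx$, $z=x$, $y\mapsto B_kx$: since $A_k$ and $B_k$ are self-adjoint,
$$\langle A_kx,x\rangle\langle B_kx,x\rangle=|\langle A_kx,x\rangle\langle x,B_kx\rangle|\le\frac12\left(\|A_kx\|\,\|B_kx\|+|\langle x,A_kB_kx\rangle|\right).$$
Summing over $k$, the first group is bounded by $\frac12\|\mathbf{A}\|\,\|\mathbf{B}\|$ via Cauchy--Schwarz (giving (ii)), or by $\frac14\left\langle\sum_{k}(A_k^2+B_k^2)x,x\right\rangle$ via $st\le\frac12(s^2+t^2)$ and $\|A_kx\|^2=\langle A_k^2x,x\rangle$ (giving (iii)); the second group is bounded by $\frac{\sqrt d}{2}\left(\sum_{k}|\langle x,A_kB_kx\rangle|^2\right)^{1/2}\le\frac{\sqrt d}{2}\,w_e(\mathbf{AB})$ using Lemma \ref{lm4} with $p=2$. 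With that substitution your outline closes and coincides with the paper's proof.
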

\begin{proof}
Let $x\in \mathscr{H}$  with $\|x\|=1.$\\
(i)  From Lemma \ref{lm3}, we have 
\begin{eqnarray*}
\hspace{1cm}  \|\langle \mathbf{C}x,x\rangle\|^2&=&\sum_{k=1}^{d} \big|\langle C_k x, x\rangle\big|^2\leq \sum_{k=1}^{d} \langle A_k x, x\rangle \langle B_k x, x\rangle\\
&\leq& \sum_{k=1}^{d} \frac12 \left(\langle A_k x, x\rangle^2+ \langle B_k x, x\rangle^2\right)\\
&\leq&\sum_{k=1}^{d} \frac12 \left(\langle A_k^2 x, x\rangle+ \langle B_k^2 x, x\rangle\right)\,\,\,(\textit{using Lemma \ref{lm1}})\\
&=&\sum_{k=1}^{d} \frac12 \langle (A_k^2 + B_k^2) x, x\rangle\\
&=&\frac12 \left\langle sum_{k=1}^{d}(A_k^2 + B_k^2) x, x \right\rangle\\
&\leq&\frac12 \left\|\sum_{k=1}^{d} (A_k^2+B_k^2)\right\|.
\end{eqnarray*}
This holds for all $x\in \mathscr{H}$ with $\|x\|=1$ and so taking supremum we get the desired inequality.

(ii) Again from Lemma \ref{lm3}, we get
\begin{eqnarray*}
\hspace{1cm}  \|\langle \mathbf{C}x,x\rangle\|^2&=&\sum_{k=1}^{d} \big|\langle C_k x, x\rangle\big|^2\leq \sum_{k=1}^{d} \langle A_k x, x\rangle \langle B_k x, x\rangle\\
&\leq&\sum_{k=1}^{d}\frac12\left(\|A_kx\|\|B_kx\|+|\langle A_kx, B_kx\rangle|\right)\,\,\,(\textit{using Lemma \ref{lm2}})\\
&\leq& \frac12 \left(\sum_{k=1}^{d}\|A_kx\|^2\right)^{\frac12}\left(\sum_{k=1}^{d}\|B_kx\|^2\right)^{\frac12}+\frac12\sum_{k=1}^{d}| \langle A_kx, B_kx\rangle|\\
&&\,\,\,\,\,\,\,\,(\textit{using Cauchy-Schwarz inequality})\\
&\leq&\frac12 \|\mathbf{A}\|\|\mathbf{B}\|+\frac{\sqrt{d}}{2}\left(\sum_{k=1}^{d}| \langle x, A_kB_kx\rangle|^2\right)^{\frac12}\,\,\,\,\,\,\,\,(\textit{using Lemma \ref{lm4}})\\
&\leq& \frac12 \|\mathbf{A}\|\|\mathbf{B}\|+\frac{\sqrt{d}}{2}w_e(\mathbf{AB}).
\end{eqnarray*}
Taking supremum over all $x\in \mathscr{H}$, $\|x\|=1$, we get the desired result.\\
(iii) From Lemma \ref{lm3}, we have 
\begin{eqnarray*}
  \|\langle \mathbf{C}x,x\rangle\|^2&=&\sum_{k=1}^{d} \big|\langle C_k x, x\rangle\big|^2\leq \sum_{k=1}^{d} \langle A_k x, x\rangle \langle B_k x, x\rangle\\
&\leq&\sum_{k=1}^{d}\frac12\left(\|A_kx\|\|B_kx\|+|\langle A_kx, B_kx\rangle|\right)\,\,\,(\textit{using Lemma \ref{lm2}})\\
&\leq& \frac14\sum_{k=1}^{d}(\|A_kx\|^2+\|B_kx\|^2)+\frac12\sum_{i=1}^{d}|\langle A_kx, B_kx\rangle|\\
&=& \frac14\sum_{k=1}^{d}\left(\langle  A_k^2x,x\rangle+\langle B_k^2x,x\rangle\right)+\frac12\sum_{i=1}^{d}|\langle A_kx, B_kx\rangle|\\
&=&\frac14\sum_{k=1}^{d}\langle (A_k^2+B_k^2)x,x\rangle+\frac12\sum_{k=1}^{d}|\langle A_kx, B_kx\rangle|\\
&\leq& \frac14 \left\langle \sum_{k=1}^{d}\left(A_k^2+ B_k^2\right)x,x\right \rangle+\frac{\sqrt{d}}{2}\left(\sum_{k=1}^{d}|\langle x,A_kB_kx \rangle|^2\right)^{\frac12}
(\textit{using Lemma \ref{lm4}})\\
&\leq&\frac14 \left\|\sum_{k=1}^{d} \left(A_k^2+B_k^2\right)\right\|+\frac{\sqrt{d}}{2}w_e(\mathbf{AB})
\end{eqnarray*}
Taking supremum over all $x\in \mathscr{H}$, $\|x\|=1$ we obtain the desired bound.
\end{proof}

As an application of  Theorem \ref{th1}, we obtain the following bounds  for  the Euclidean operator radius of the  product of two $d$-tuple operators.

\begin{cor}
Let $\mathbf{B}=(B_1,B_2, \ldots,B_d), \mathbf{C}=(C_1,C_2, \ldots,C_d)\in\mathbb{B}^d(\mathscr{H}),$  then \\
(i) $ w_e(\mathbf{BC})\leq \sqrt{\frac12 \left\|\sum_{k=1}^{d} \left(|B_k^*|^4+|C_k|^4\right)\right\|}.$\\
(ii) $ w_e(\mathbf{BC})\leq \sqrt{\frac12\|\mathbf{BB^*}\|\|\mathbf{C^*C}\|+\frac{\sqrt{d}}{2}w_e(\mathbf{B(CB)^*C})}$.\\
(iii) $ w_e(\mathbf{BC})\leq \sqrt{\frac14\|\sum_{k=1}^{d} |B_k^*|^4+|C_k|^4\|+\frac{\sqrt{d}}{2}w_e(\mathbf{B(CB)^*C})}$.
\label{cor1.1}
\end{cor}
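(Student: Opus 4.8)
The plan is to derive Corollary \ref{cor1.1} from Theorem \ref{th1} by applying it to a suitably constructed positive $2\times 2$ block matrix built out of the polar decomposition of each $C_k$. First I would write $C_k = U_k|C_k|$ for the polar decomposition, so that $B_kC_k = B_kU_k|C_k|$. The standard observation is that for any operator $T = B_kU_k|C_k|$, one has the positive block matrix
$$\begin{bmatrix} |T^*| & T \\ T^* & |T| \end{bmatrix}\geq 0;$$
but here I want to insert $\mathbf{A}$ and $\mathbf{B}$ that are amenable to the right-hand sides of Theorem \ref{th1}. The cleaner route, matching the stated conclusions, is to use the factorization $B_kC_k$ and the block matrix whose diagonal entries are $|B_k^*|^2 = B_kB_k^*$ and $|C_k|^2 = C_k^*C_k$. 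Indeed, writing $C_k = U_k|C_k|$,
$$\begin{bmatrix} B_kB_k^* & B_kC_k \\ (B_kC_k)^* & C_k^*C_k \end{bmatrix} = \begin{bmatrix} B_k & 0 \\ 0 & U_k^* \end{bmatrix}^{\!*}\!\!\begin{bmatrix} I & 0 \\ 0 & |C_k| \end{bmatrix}\!\begin{bmatrix} B_k^* & ? \\ ? & ? \end{bmatrix},$$
so I would instead just verify directly that $\begin{bmatrix} B_kB_k^* & B_kC_k \\ C_k^*B_k^* & C_k^*C_k \end{bmatrix}$ factors as $X_k^*X_k$ with $X_k = \begin{bmatrix} B_k^* & 0 \\ C_k & 0 \end{bmatrix}$, hence is positive. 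Collecting these over $k$, the $d$-tuple block matrix $\begin{bmatrix} \mathbf{A} & \mathbf{(BC)^*} \\ \mathbf{BC} & \mathbf{B'} \end{bmatrix}$ with $\mathbf{A} = (B_1B_1^*,\dots,B_dB_d^*) = (|B_1^*|^2,\dots,|B_d^*|^2)$ and $\mathbf{B'} = (C_1^*C_1,\dots,C_d^*C_d) = (|C_1|^2,\dots,|C_d|^2)$ is positive, and both $\mathbf{A}$ and $\mathbf{B'}$ are positive $d$-tuples.

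Next I would apply Theorem \ref{th1} with $\mathbf{C}$ replaced by $\mathbf{BC}$, $\mathbf{A} = (|B_k^*|^2)_k$, $\mathbf{B} = (|C_k|^2)_k$. For part (i) this gives immediately
$$w_e(\mathbf{BC}) \leq \sqrt{\tfrac12\left\|\sum_{k=1}^d\big((|B_k^*|^2)^2 + (|C_k|^2)^2\big)\right\|} = \sqrt{\tfrac12\left\|\sum_{k=1}^d\big(|B_k^*|^4 + |C_k|^4\big)\right\|},$$
which is exactly (i). For (ii), Theorem \ref{th1}(ii) yields $w_e(\mathbf{BC}) \leq \sqrt{\tfrac12\|\mathbf{A}\|\|\mathbf{B}\| + \tfrac{\sqrt d}{2}w_e(\mathbf{AB})}$; here $\|\mathbf{A}\| = \|(B_kB_k^*)_k\| = \|\mathbf{BB^*}\|$, $\|\mathbf{B}\| = \|(C_k^*C_k)_k\| = \|\mathbf{C^*C}\|$, and the product tuple $\mathbf{AB}$ has $k$-th entry $|B_k^*|^2|C_k|^2 = B_kB_k^*C_k^*C_k$. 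The only thing requiring care is to recognize this $k$-th entry as the $k$-th entry of $\mathbf{B(CB)^*C}$: indeed $(B(CB)^*C)_k = B_k(C_kB_k)^*C_k = B_kB_k^*C_k^*C_k$, so $w_e(\mathbf{AB}) = w_e(\mathbf{B(CB)^*C})$, giving (ii). Part (iii) follows identically from Theorem \ref{th1}(iii), combining the $\|\sum(|B_k^*|^4+|C_k|^4)\|$ term from the computation in (i) with the $w_e(\mathbf{B(CB)^*C})$ term from (ii).

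The only genuine subtlety — and the step I would flag as the place to be careful rather than a real obstacle — is the identification of $w_e(\mathbf{AB})$ with $w_e(\mathbf{B(CB)^*C})$, which hinges on the bookkeeping that $\mathbf{AB}$ is defined entrywise as $(A_kB_k)_k$ and that $A_kB_k = |B_k^*|^2|C_k|^2 = B_kB_k^*C_k^*C_k = B_k(C_kB_k)^*C_k$. Everything else is a direct substitution into the three parts of Theorem \ref{th1} together with the observation that $B_kB_k^* = |B_k^*|^2$ and $C_k^*C_k = |C_k|^2$ are positive, so that the hypotheses of Theorem \ref{th1} are met. Since positivity of the block matrix $\begin{bmatrix} \mathbf{A} & \mathbf{(BC)^*} \\ \mathbf{BC} & \mathbf{B'} \end{bmatrix}$ reduces to positivity of each scalar-entry block $\begin{bmatrix} B_kB_k^* & B_kC_k \\ C_k^*B_k^* & C_k^*C_k \end{bmatrix} = \begin{bmatrix} B_k \\ C_k^* \end{bmatrix}^{*}\cdot\begin{bmatrix} B_k \\ C_k^* \end{bmatrix}^{*\,*}$ — more precisely it equals $\begin{bmatrix} B_k & 0 \\ C_k^* & 0\end{bmatrix}\begin{bmatrix} B_k & 0 \\ C_k^* & 0\end{bmatrix}^{*}$ — no polar decomposition is actually needed; the factorization is explicit, which I would present as the opening line of the proof.
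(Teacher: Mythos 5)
Your proposal is correct and follows essentially the same route as the paper, which simply observes that $\begin{bmatrix}\mathbf{BB^*} & \mathbf{BC}\\ \mathbf{C^*B^*} & \mathbf{C^*C}\end{bmatrix}$ is positive and feeds it into Theorem \ref{th1}; your explicit factorization $X_kX_k^*$ and the identification $B_kB_k^*C_k^*C_k=B_k(C_kB_k)^*C_k$ merely make the paper's one-line argument fully explicit (and you are right that no polar decomposition is needed). The only cosmetic slip is that the genuinely positive matrix has $\mathbf{BC}$ in the upper-right corner, i.e.\ in the slot the theorem labels $\mathbf{C^*}$, rather than the lower-left as you display it, but this is harmless since $w_e(\mathbf{T^*})=w_e(\mathbf{T})$.
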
 

\begin{proof}
Observe  that $\begin{bmatrix}
\mathbf{BB^*} & \mathbf{BC}\\
\mathbf{C^*B^*} & \mathbf{C^*C}
\end{bmatrix}$ is positive. Using this positive operator matrix in Theorem \ref{th1}, we obtain the desired inequalities.
\end{proof}

In the next results, we obtain an upper bound for the Euclidean operator radius of  $d$-tuple operators.

\begin{cor} Let $\mathbf{A}=(A_1,A_2, \ldots, A_d) \in {\mathbb{B}^d(\mathscr{H})}$, then 
$$ w_e(\mathbf{A})\leq \sqrt{\frac12 \left\|\sum_{k=1}^{d}\left( |A_k^*|^{4(1-t)}+|A_k|^{4t}\right)\right\|},$$
for all $t$, $0\leq t \leq 1.$
 In particular,
$$  w_e(\mathbf{A})\leq \sqrt{\frac12  \left\|\sum_{k=1}^{d} \left(|A_k^*|^2+|A_k|^2\right)\right\|}.$$

\label{th2}\end{cor}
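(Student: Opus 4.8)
The plan is to derive Corollary \ref{th2} from Corollary \ref{cor1.1}(i) by factoring each coordinate of $\mathbf{A}$ through its polar decomposition. Fix $t\in[0,1]$ and write $A_k=U_k|A_k|$, where $U_k$ is the partial isometry in the polar decomposition of $A_k$. Put $B_k=U_k|A_k|^{1-t}$ and $C_k=|A_k|^t$; then
$$B_kC_k=U_k|A_k|^{1-t}|A_k|^t=U_k|A_k|=A_k\qquad(k=1,\dots,d),$$
so that $\mathbf{A}=\mathbf{BC}$ with $\mathbf{B}=(B_1,\dots,B_d)$ and $\mathbf{C}=(C_1,\dots,C_d)$. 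Hence it suffices to identify $|B_k^*|$ and $|C_k|$ and feed them into Corollary \ref{cor1.1}(i).

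For the $\mathbf{C}$-term this is immediate: since $|A_k|^t\geq0$, we get $|C_k|=(C_k^*C_k)^{1/2}=(|A_k|^{2t})^{1/2}=|A_k|^t$, and therefore $|C_k|^4=|A_k|^{4t}$. For the $\mathbf{B}$-term, $|B_k^*|^2=B_kB_k^*=U_k|A_k|^{2(1-t)}U_k^*$, and here I would invoke the standard polar-decomposition identity $U_k\,f(|A_k|)\,U_k^*=f(|A_k^*|)$, valid for continuous $f$ with $f(0)=0$ (equivalently $U_k|A_k|^rU_k^*=|A_k^*|^r$ for $r>0$), to conclude $|B_k^*|^2=|A_k^*|^{2(1-t)}$; taking positive square roots gives $|B_k^*|=|A_k^*|^{1-t}$ and hence $|B_k^*|^4=|A_k^*|^{4(1-t)}$. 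Substituting these into Corollary \ref{cor1.1}(i) produces
$$w_e(\mathbf{A})=w_e(\mathbf{BC})\leq\sqrt{\frac12\left\|\sum_{k=1}^d\left(|A_k^*|^{4(1-t)}+|A_k|^{4t}\right)\right\|},$$
and setting $t=\tfrac12$ recovers the particular case stated, since then $4(1-t)=4t=2$.

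The only subtlety is the treatment of the partial isometry $U_k$ and of the endpoints $t=0,1$. The identity $U_k|A_k|^rU_k^*=|A_k^*|^r$ follows from the fact that $U_k$ restricts to a unitary from $\overline{\operatorname{ran}}\,|A_k|$ onto $\overline{\operatorname{ran}}\,|A_k^*|$ carrying $|A_k|$ to $|A_k^*|$ on these subspaces, while $|A_k|$ vanishes on $\ker A_k$; for $t\in(0,1)$ one has $r=2(1-t)>0$ and the argument applies verbatim. At $t=1$ one has $B_k=U_k$, so $|B_k^*|^4=U_kU_k^*$ is a projection and hence $\leq I$; reading $|A_k^*|^0$ as $I$, the bound still holds (with an inequality replacing equality in this single intermediate step), and the case $t=0$ is trivial with $C_k=I$. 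Everything else is a direct substitution, so I expect no real obstacle beyond stating the polar-decomposition facts cleanly.
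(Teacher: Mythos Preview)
Your proof is correct and follows essentially the same route as the paper: choose $B_k=U_k|A_k|^{1-t}$, $C_k=|A_k|^t$ from the polar decomposition, apply Corollary~\ref{cor1.1}(i), and invoke $U_k|A_k|^{2(1-t)}U_k^*=|A_k^*|^{2(1-t)}$. You supply more justification than the paper (explaining the partial-isometry identity and handling the endpoints $t=0,1$), but the argument is the same.
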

\begin{proof}
Let $A_k=U_k|A_k|$ be the polar decomposition of $A_k$, for each $k=1,2,\ldots,d.$ Considering $B_k=U_k|A_k|^{1-t}$ and $C_k=|A_k|^t$ in  Corollary \ref{cor1.1} (i), we get 
$$ w_e(\mathbf{T})\leq \sqrt{\frac12 \left\|\sum_{k=1}^{d}\left( (U_k|A_k|^{2(1-t)}U_k^*)^2+|A_k|^{4t}\right)\right\|}.$$
Since $ U_k|A_k|^{2(1-t)}U_k^*=|A_k^*|^{2(1-t)},$ we obtain the first inequality. And the second inequality follows by considering $t=\frac12.$
\end{proof}



\begin{cor}\label{th3}
Let $\mathbf{A}=(A_1,A_2, \ldots, A_d) \in {\mathbb{B}^d(\mathscr{H})}$, then 
\begin{eqnarray*}
 w_e(\mathbf{A})&\leq&   \sqrt{\frac12 \left\||\mathbf{A^*}|^{2(1-t)}\right\|\left\||\mathbf{A}|^{2t}\right\|+\frac{\sqrt{d}}{2}w_e\left(\mathbf{|A|}^{2t}\mathbf{|A^*|}^{2(1-t)}\right)}\\
&=& \sqrt{\frac12\sqrt{\left\|\sum_{k=1}^{d} |A_k^*|^{4(1-t)}\right\|}\sqrt{\left\|\sum_{k=1}^{d}|A_k|^{4t}\right\|}+\frac{\sqrt{d}}{2}w_e\left(\mathbf{|A|}^{2t}\mathbf{|A^*|}^{2(1-t)}\right)}, 
\end{eqnarray*}
for all $t$, $0\leq t \leq 1.$
 In particular,
 \begin{eqnarray*}
 w_e(\mathbf{A})&\leq& \sqrt{\frac12\|\mathbf{A^*}\|\|\mathbf{A}\|+\frac{\sqrt{d}}{2}w_e(\mathbf{|A|}\mathbf{|A^*|})}\\
&=&\sqrt{\frac12\sqrt{\left\|\sum_{k=1}^{d} |A_k^*|^{2}\right\|}\sqrt{\left\|\sum_{k=1}^{d}|A_k|^{2}\right\|}+\frac{\sqrt{d}}{2}w_e(\mathbf{|A|}\mathbf{|A^*|})}.
\end{eqnarray*}
\end{cor}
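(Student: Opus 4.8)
The plan is to mimic exactly the strategy used in Corollary \ref{th2}, but this time feeding the polar decomposition into part (ii) of Corollary \ref{cor1.1} instead of part (i). Concretely, for each $k$ write the polar decomposition $A_k=U_k|A_k|$ and set $B_k=U_k|A_k|^{1-t}$ and $C_k=|A_k|^t$, so that $B_kC_k=U_k|A_k|^{1-t}|A_k|^t=U_k|A_k|=A_k$, i.e. $\mathbf{BC}=\mathbf{A}$. Then Corollary \ref{cor1.1}(ii) gives
\begin{eqnarray*}
 w_e(\mathbf{A})=w_e(\mathbf{BC})\leq\sqrt{\tfrac12\|\mathbf{BB^*}\|\,\|\mathbf{C^*C}\|+\tfrac{\sqrt d}{2}\,w_e\big(\mathbf{B(CB)^*C}\big)}.
\end{eqnarray*}
First I would simplify the three pieces under the square root. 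For $\mathbf{BB^*}$: $B_kB_k^*=U_k|A_k|^{1-t}|A_k|^{1-t}U_k^*=U_k|A_k|^{2(1-t)}U_k^*=|A_k^*|^{2(1-t)}$, exactly the identity already invoked in the proof of Corollary \ref{th2}; hence $\|\mathbf{BB^*}\|=\big\||\mathbf{A^*}|^{2(1-t)}\big\|$, which by the definition of the Euclidean operator norm of a positive tuple equals $\sqrt{\big\|\sum_k|A_k^*|^{4(1-t)}\big\|}$. Similarly $C_k^*C_k=|A_k|^t|A_k|^t=|A_k|^{2t}$, so $\|\mathbf{C^*C}\|=\big\||\mathbf{A}|^{2t}\big\|=\sqrt{\big\|\sum_k|A_k|^{4t}\big\|}$.

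Next I would compute the operator-radius term. The $k$-th component of $\mathbf{B(CB)^*C}$ is $B_k(C_kB_k)^*C_k=B_kB_k^*C_k^*C_k$; using the two identities just derived this is $|A_k^*|^{2(1-t)}|A_k|^{2t}$. Wait — one must be careful about the order: $(C_kB_k)^*=B_k^*C_k^*$, so $B_k(C_kB_k)^*C_k=B_kB_k^*C_k^*C_k=|A_k^*|^{2(1-t)}\cdot|A_k|^{2t}$. Thus $\mathbf{B(CB)^*C}=|\mathbf{A^*}|^{2(1-t)}|\mathbf{A}|^{2t}$ componentwise. Since $w_e$ is a norm, $w_e\big(|\mathbf{A^*}|^{2(1-t)}|\mathbf{A}|^{2t}\big)=w_e\big(|\mathbf{A}|^{2t}|\mathbf{A^*}|^{2(1-t)}\big)$ — actually this swap needs justification: $w_e(\mathbf{T})=w_e(\mathbf{T}^*)$ componentwise because $|\langle T_kx,x\rangle|=|\langle x,T_k^*x\rangle|=|\langle T_k^*x,x\rangle|$, and $(|A_k^*|^{2(1-t)}|A_k|^{2t})^*=|A_k|^{2t}|A_k^*|^{2(1-t)}$. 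So the term equals $w_e\big(\mathbf{|A|}^{2t}\mathbf{|A^*|}^{2(1-t)}\big)$ as written in the statement. Substituting all three simplifications into the square root yields the displayed bound, and the "in particular" case is just $t=\tfrac12$.

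I do not expect a genuine obstacle here: the argument is a direct substitution into an already-proved corollary, and the only thing requiring mild care is bookkeeping the order of factors in $B_kB_k^*C_k^*C_k$ and the self-adjoint/transpose invariance of $w_e$ used to rewrite $w_e\big(|\mathbf{A^*}|^{2(1-t)}|\mathbf{A}|^{2t}\big)$ in the form stated. The potentially subtle point — and the thing I would state explicitly — is the equality $\|\mathbf{P}\|=\sqrt{\big\|\sum_k P_k^2\big\|}$ for a positive $d$-tuple $\mathbf{P}$, which follows since $\|\mathbf{P}\|^2=\sup_{\|x\|=1}\sum_k\|P_kx\|^2=\sup_{\|x\|=1}\sum_k\langle P_k^2x,x\rangle=\big\|\sum_kP_k^2\big\|$; applying this with $P_k=|A_k^*|^{2(1-t)}$ and with $P_k=|A_k|^{2t}$ gives the two square-root expressions in the second line of the display.
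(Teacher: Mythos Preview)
Your proposal is correct and follows exactly the paper's own argument: take the polar decomposition $A_k=U_k|A_k|$, set $B_k=U_k|A_k|^{1-t}$, $C_k=|A_k|^t$, and plug into Corollary~\ref{cor1.1}(ii), then simplify using $U_k|A_k|^{2(1-t)}U_k^*=|A_k^*|^{2(1-t)}$ as in Corollary~\ref{th2}. The paper's proof is in fact terser than yours---it simply cites ``similar arguments as Corollary~\ref{th2}''---so your explicit bookkeeping of $B_kB_k^*C_k^*C_k$, the adjoint-invariance $w_e(\mathbf{T})=w_e(\mathbf{T}^*)$, and the identity $\|\mathbf{P}\|^2=\big\|\sum_kP_k^2\big\|$ for positive tuples only adds clarity.
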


\begin{proof}
Let $A_k=U_k|A_k|$ be the polar decomposition of $A_k$ for each $k=1,2,\ldots,d.$ Considering $B_k=U_k|A_k|^{1-t}$ and $C_k=|A_k|^t$  in  Corollary \ref{cor1.1} (ii) and using similar arguments as Corollary \ref{th2}, we obtain the desired bounds.
\end{proof}

\begin{cor} \label{th4}
	Let $\mathbf{A}=(A_1,A_2, \ldots, A_d) \in {\mathbb{B}^d(\mathscr{H})}$, then 
	$$ w_e(\mathbf{A})\leq \sqrt{\frac14 \left\|\sum_{k=1}^{d}\left( |A_k^*|^{4(1-t)}+|A_k|^{4t}\right)\right\|+\frac{\sqrt{d}}{2}w_e\left(\mathbf{|A|}^{2t}\mathbf{|A^*|}^{2(1-t)}\right)},$$
	for all $t,$ $0\leq t \leq 1.$
	In particular, 
	$$  w_e(\mathbf{A})\leq \sqrt{\frac14  \left\|\sum_{k=1}^{d} \left(|A_k^*|^2+|A_k|^2\right)\right\|+\frac{\sqrt{d}}{2}w_e(\mathbf{|A|}\mathbf{|A^*|})}.$$
\end{cor}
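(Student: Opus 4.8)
The plan is to mimic the proof of Corollary~\ref{th3} but to invoke part (iii) of Corollary~\ref{cor1.1} instead of part (ii), so that the estimate carries the factor $\tfrac14$ in front of the sum of fourth powers rather than $\tfrac12$ in front of the product of norms. Concretely, for each $k=1,2,\ldots,d$ write the polar decomposition $A_k=U_k|A_k|$ and set $B_k=U_k|A_k|^{1-t}$, $C_k=|A_k|^t$, so that $B_kC_k=U_k|A_k|^{1-t}|A_k|^t=U_k|A_k|=A_k$ and hence $\mathbf{BC}=\mathbf{A}$.

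Next I would substitute these choices into Corollary~\ref{cor1.1}(iii). For the first term, note $|C_k|^4=|A_k|^{4t}$, while $|B_k^*|^2=B_kB_k^*=U_k|A_k|^{2(1-t)}U_k^*=|A_k^*|^{2(1-t)}$ (using $U_k|A_k|^{2(1-t)}U_k^*=|A_k^*|^{2(1-t)}$, exactly as in Corollary~\ref{th2}), so $|B_k^*|^4=|A_k^*|^{4(1-t)}$; thus $\sum_{k=1}^d(|B_k^*|^4+|C_k|^4)=\sum_{k=1}^d(|A_k^*|^{4(1-t)}+|A_k|^{4t})$. For the second term I must identify $\mathbf{B(CB)^*C}$: componentwise, $B_k(C_kB_k)^*C_k=B_k B_k^*C_k^*C_k=U_k|A_k|^{2(1-t)}U_k^*\,|A_k|^{2t}=|A_k^*|^{2(1-t)}|A_k|^{2t}$, which is the $k$th entry of $\mathbf{|A^*|}^{2(1-t)}\mathbf{|A|}^{2t}$. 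Since $w_e$ is unchanged under this cyclic-type rearrangement of the relevant term as recorded in Corollary~\ref{cor1.1}, we obtain
\[
w_e(\mathbf{A})\leq\sqrt{\tfrac14\left\|\sum_{k=1}^{d}\bigl(|A_k^*|^{4(1-t)}+|A_k|^{4t}\bigr)\right\|+\tfrac{\sqrt d}{2}\,w_e\bigl(\mathbf{|A|}^{2t}\mathbf{|A^*|}^{2(1-t)}\bigr)},
\]
noting $w_e\bigl(\mathbf{|A^*|}^{2(1-t)}\mathbf{|A|}^{2t}\bigr)=w_e\bigl(\mathbf{|A|}^{2t}\mathbf{|A^*|}^{2(1-t)}\bigr)$ since $w_e(\mathbf{T})=w_e(\mathbf{T}^*)$ coordinatewise and $\bigl(|A_k^*|^{2(1-t)}|A_k|^{2t}\bigr)^*=|A_k|^{2t}|A_k^*|^{2(1-t)}$. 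Finally, specializing to $t=\tfrac12$ gives $|A_k^*|^{4(1-t)}=|A_k^*|^2$, $|A_k|^{4t}=|A_k|^2$, and $\mathbf{|A|}^{2t}\mathbf{|A^*|}^{2(1-t)}=\mathbf{|A|}\,\mathbf{|A^*|}$, which is exactly the stated particular case.

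The only real point requiring care — and the place where a reader might stumble — is the bookkeeping of which term in Corollary~\ref{cor1.1}(iii) produces which expression after the polar substitution, in particular verifying $B_k(C_kB_k)^*C_k=|A_k^*|^{2(1-t)}|A_k|^{2t}$ and then matching it (up to the harmless adjoint) to $\mathbf{|A|}^{2t}\mathbf{|A^*|}^{2(1-t)}$; everything else is the same substitution already carried out in Corollaries~\ref{th2} and~\ref{th3}. There is no genuine analytic obstacle here, since all the heavy lifting (Buzano, McCarthy, Bohr, and the block-positivity Lemma~\ref{lm3}) is already packaged inside Theorem~\ref{th1} and Corollary~\ref{cor1.1}.
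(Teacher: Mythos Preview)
Your proposal is correct and follows exactly the paper's approach: the paper's proof simply says to take the polar decomposition $A_k=U_k|A_k|$, set $B_k=U_k|A_k|^{1-t}$ and $C_k=|A_k|^t$ in Corollary~\ref{cor1.1}(iii), and argue as in Corollary~\ref{th2}. Your write-up in fact supplies more detail than the paper does (the explicit identification of $|B_k^*|^4$, $|C_k|^4$, and $B_k(C_kB_k)^*C_k$, plus the adjoint step $w_e(\mathbf{T})=w_e(\mathbf{T}^*)$), all of which is correct.
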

\begin{proof}
	Let $A_k=U_k|A_k|$ be the polar decomposition of $A_k$ for each $k=1,2,\ldots,d.$ Considering $B_k=U_k|A_k|^{1-t}$ and $C_k=|A_k|^t$  in Corollary \ref{cor1.1} (iii) and using the similar argument as  Corollary \ref{th2}, we get the desired results.
\end{proof}

To obtain our next result we need the following lemma.

\begin{lemma}\cite{FK}
Let $A, B \in \mathbb{B}(\mathscr{H})$ be such that $|A|B = B^*|A|$. Let $f$ and $g$ be two non-negative continuous functions on $[0,\infty)$ such that $f (\lambda)g(\lambda) =\lambda$, for all $\lambda \in [0,\infty)$. Then
$$|\langle ABx, y\rangle| \leq r(B)\|f(|A|)x\|\|g(|A^*|)y\|,$$
for all $x, y \in\mathscr{ H}$.
\label{lm5}\end{lemma}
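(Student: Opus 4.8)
This is \cite{FK}; I would follow the argument there, whose core I outline now. Let $A=U|A|$ be the polar decomposition, so that $A^{*}=|A|U^{*}$, $|A^{*}|=U|A|U^{*}$, $U^{*}U|A|=|A|$ and $|A|U^{*}=U^{*}|A^{*}|$; by the continuous functional calculus the last identity upgrades to $h(|A|)U^{*}=U^{*}h(|A^{*}|)$ for every $h\in C[0,\infty)$ with $h(0)=0$, in particular for $h=\sqrt{\,\cdot\,}$. For $x,y\in\mathscr H$ we then have
$$\langle ABx,y\rangle=\langle U|A|Bx,y\rangle=\langle |A|Bx,U^{*}y\rangle=\langle |A|^{1/2}Bx,\,|A|^{1/2}U^{*}y\rangle,$$
and since $\||A|^{1/2}U^{*}y\|^{2}=\langle U|A|U^{*}y,y\rangle=\langle |A^{*}|y,y\rangle=\||A^{*}|^{1/2}y\|^{2}$, the Cauchy--Schwarz inequality shows it is enough to establish the pointwise estimate $\||A|^{1/2}Bx\|\le r(B)\,\||A|^{1/2}x\|$.

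This is exactly where the hypothesis $|A|B=B^{*}|A|$ enters. An immediate induction gives $|A|B^{n}=(B^{*})^{n}|A|$ for all $n\ge0$. Fixing $x$ with $\|x\|=1$ and writing $b_{m}:=\||A|^{1/2}B^{m}x\|$, we get
$$b_{m}^{2}=\langle |A|B^{m}x,B^{m}x\rangle=\langle (B^{*})^{m}|A|B^{m}x,x\rangle=\langle |A|B^{2m}x,x\rangle=\langle |A|^{1/2}B^{2m}x,|A|^{1/2}x\rangle\le b_{2m}b_{0},$$
the last step being Cauchy--Schwarz. Iterating $b_{2^{j}}^{2}\le b_{2^{j+1}}b_{0}$ yields $b_{1}\le b_{2^{J}}^{1/2^{J}}\,b_{0}^{\,1-1/2^{J}}$ for every $J\ge1$, and since $b_{2^{J}}\le\||A|^{1/2}\|\,\|B^{2^{J}}\|$ while $\|B^{2^{J}}\|^{1/2^{J}}\to r(B)$ by Gelfand's formula along the subsequence $n=2^{J}$, letting $J\to\infty$ gives $b_{1}\le r(B)b_{0}$, that is $\||A|^{1/2}Bx\|\le r(B)\||A|^{1/2}x\|$. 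Feeding this into the chain of the first paragraph gives $|\langle ABx,y\rangle|\le r(B)\,\||A|^{1/2}x\|\,\||A^{*}|^{1/2}y\|$, which is the claim for $f(\lambda)=g(\lambda)=\lambda^{1/2}$. For general non-negative continuous $f,g$ with $f(\lambda)g(\lambda)=\lambda$ one runs the same argument with the factorization $|A|=f(|A|)g(|A|)$ in place of $|A|=|A|^{1/2}\cdot|A|^{1/2}$ and converts $\|g(|A|)U^{*}y\|$ into $\|g(|A^{*}|)y\|$ through $g(|A|)U^{*}=U^{*}g(|A^{*}|)$; I would defer the bookkeeping in that generality to \cite{FK}.

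The step I expect to be the genuine obstacle is the bootstrap just described: obtaining the spectral radius $r(B)$ rather than the much weaker bound $\|B\|$ is precisely what forces the repeated squaring and the use of $\lim_{n}\|B^{n}\|^{1/n}=r(B)$, and it relies essentially on $|A|B=B^{*}|A|$ via $|A|B^{n}=(B^{*})^{n}|A|$. A minor point to settle first is that $\ker|A|$ need not be trivial; but $|A|B=B^{*}|A|$ makes $\ker|A|$ invariant under $B$, so if $x=x_{0}+x_{1}$ with $x_{0}\in\ker|A|$ and $x_{1}\in\overline{\mathrm{ran}\,|A|}$ then $ABx_{0}=A(Bx_{0})=0$ and $|A|^{1/2}x_{0}=0$; hence neither side of the asserted inequality changes when $x$ is replaced by $x_{1}$, and one may assume $x\in\overline{\mathrm{ran}\,|A|}$ from the outset.
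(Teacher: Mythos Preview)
The paper does not prove this lemma at all; it is quoted from \cite{FK} and used as a black box. So there is no ``paper's own proof'' to compare against, and your outline is in effect a reconstruction of Kittaneh's original argument. For the case $f(\lambda)=g(\lambda)=\lambda^{1/2}$ your sketch is correct and is exactly the standard proof: the hypothesis $|A|B=B^{*}|A|$ propagates to $|A|B^{n}=(B^{*})^{n}|A|$, giving $(B^{*})^{m}|A|B^{m}=|A|B^{2m}$ and hence the doubling inequality $b_{m}^{2}\le b_{2m}b_{0}$; Gelfand's formula along $n=2^{J}$ then produces $r(B)$ rather than $\|B\|$. The remark on reducing to $x\in\overline{\operatorname{ran}|A|}$ is also fine.

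One caveat on your last paragraph. The claim that for general $f,g$ ``one runs the same argument with the factorization $|A|=f(|A|)g(|A|)$'' is too quick. The doubling step hinges on the identity $(B^{*})^{m}|A|B^{m}=|A|B^{2m}$, which uses $|A|B=B^{*}|A|$ for $|A|$ itself; replacing $|A|^{1/2}$ by $f(|A|)$ in the definition of $b_{m}$ would require $f(|A|)^{2}B=B^{*}f(|A|)^{2}$, and this does \emph{not} follow from $|A|B=B^{*}|A|$ (already $|A|^{2}B\neq B^{*}|A|^{2}$ in general, since $|A|B=B^{*}|A|$ merely says $|A|B$ is self-adjoint). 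The route in \cite{FK} keeps the iteration at the level of $|A|$, obtaining $\langle|A|Bx,Bx\rangle\le r(B)^{2}\langle|A|x,x\rangle$, and only afterwards inserts the splitting $|A|=f(|A|)g(|A|)$ together with the intertwining relation $g(|A|)U^{*}=U^{*}g(|A^{*}|)$; you do explicitly defer that part to the reference, so the gap is acknowledged rather than hidden, but the phrase ``the same argument'' should be softened.
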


The following result provides  an upper bound for the Euclidean operator radius of the product of two $d$-tuple operators.

\begin{theorem}
Let $\mathbf{B}=(B_1,B_2, \ldots,B_d)$,  $\mathbf{C}=(C_1,C_2, \ldots,C_d)\in \mathbb{B}^d(\mathscr{H})$ be such that
$|\mathbf{B}|\mathbf{C}=\mathbf{C}^*|\mathbf{B}|$ (i.e.,
 $|B_k|C_k=C_k^*|B_k|$ for all $k=1,2,\ldots,d$). If $f,g: [0,\infty)\rightarrow [0,\infty)$ are continuous function with $f(\lambda)g(\lambda)=\lambda$ for all $\lambda\geq0$, then 
 \begin{eqnarray*}
  w_e(\mathbf{B}\mathbf{C})&\leq& {\frac1{\sqrt{2}}\max_{k}\left\{r(C_k)\right\} w_e\left(f^2(|\mathbf{B}|)+i g^2(|\mathbf{B^*}|)\right)}\\
  &\leq&{\frac1{\sqrt2}\max_{k}\left\{r(C_k)\right\} \sqrt{\left\|\sum_{k=1}^{d}\left(f^4(|B_k|)+g^4(|B_k^*|)\right)\right\|}}. \end{eqnarray*}
\label{th9}\end{theorem}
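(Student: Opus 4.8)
The plan is to estimate $\|\langle \mathbf{BC}x,x\rangle\|$ directly for a unit vector $x$, using Lemma \ref{lm5} coordinatewise. Fix $x\in\mathscr{H}$ with $\|x\|=1$. Applying Lemma \ref{lm5} to each pair $(B_k,C_k)$ (the hypothesis $|B_k|C_k=C_k^*|B_k|$ is exactly what the lemma needs), with $y=x$, gives
\begin{eqnarray*}
|\langle B_kC_kx,x\rangle| &\leq& r(C_k)\,\|f(|B_k|)x\|\,\|g(|B_k^*|)x\|\\
&\leq& \max_j\{r(C_j)\}\,\|f(|B_k|)x\|\,\|g(|B_k^*|)x\|.
\end{eqnarray*}
Squaring, summing over $k$, and then using the elementary inequality $ab\leq \tfrac12(a^2+b^2)$ on the pair $\big(\|f(|B_k|)x\|^2,\|g(|B_k^*|)x\|^2\big)$ after an application of Cauchy--Schwarz for the product of the two square roots — actually it is cleaner to write $\|f(|B_k|)x\|\,\|g(|B_k^*|)x\| \le \tfrac12\big(\|f(|B_k|)x\|^2 + \|g(|B_k^*|)x\|^2\big)$ directly, but that loses the $w_e$-form, so instead I will keep the product and recognize it as $|\langle (f^2(|B_k|)+ig^2(|B_k^*|))x,x\rangle|$ up to a factor. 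Concretely, for nonnegative reals $\alpha,\beta$ one has $\alpha\beta \le \tfrac12|\alpha+i\beta|^2$ only when... let me instead use the exact identity: since $f^2(|B_k|)$ and $g^2(|B_k^*|)$ are positive operators, $\|f(|B_k|)x\|^2 = \langle f^2(|B_k|)x,x\rangle$ and $\|g(|B_k^*|)x\|^2 = \langle g^2(|B_k^*|)x,x\rangle$, both real and nonnegative, so their product is $\le \tfrac12\big(\langle f^2(|B_k|)x,x\rangle^2 + \langle g^2(|B_k^*|)x,x\rangle^2\big) = \tfrac12\,|\langle (f^2(|B_k|)+ig^2(|B_k^*|))x,x\rangle|^2$.

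Putting this together,
\begin{eqnarray*}
\|\langle \mathbf{BC}x,x\rangle\|^2 &=& \sum_{k=1}^d |\langle B_kC_kx,x\rangle|^2 \;\leq\; \max_j\{r(C_j)\}^2 \sum_{k=1}^d \|f(|B_k|)x\|^2\|g(|B_k^*|)x\|^2\\
&\leq& \frac{\max_j\{r(C_j)\}^2}{2}\sum_{k=1}^d \big|\big\langle (f^2(|B_k|)+ig^2(|B_k^*|))x,x\big\rangle\big|^2.
\end{eqnarray*}
Taking the supremum over $\|x\|=1$ yields the first asserted bound $w_e(\mathbf{BC})\le \tfrac1{\sqrt2}\max_k\{r(C_k)\}\,w_e\big(f^2(|\mathbf{B}|)+ig^2(|\mathbf{B^*}|)\big)$, once one checks that $f^2(|\mathbf{B}|)+ig^2(|\mathbf{B^*}|)$ denotes the $d$-tuple $\big(f^2(|B_k|)+ig^2(|B_k^*|)\big)_{k=1}^d$ and that $w_e$ of this tuple is the supremum of exactly that last sum of squares.

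For the second inequality, I apply the bound $w_e(\mathbf{T})^2 \le \tfrac12\big\|\sum_k (|T_k|^2 + |T_k^*|^2)\big\|$ type estimate — more precisely, for $T_k = f^2(|B_k|)+ig^2(|B_k^*|)$ one has $|\langle T_kx,x\rangle|^2 = \langle f^2(|B_k|)x,x\rangle^2 + \langle g^2(|B_k^*|)x,x\rangle^2 \le \langle f^4(|B_k|)x,x\rangle + \langle g^4(|B_k^*|)x,x\rangle$ by the McCarthy/convexity inequality (Lemma \ref{lm1} with $p=2$). Summing over $k$, this is $\langle \sum_k(f^4(|B_k|)+g^4(|B_k^*|))x,x\rangle \le \big\|\sum_k(f^4(|B_k|)+g^4(|B_k^*|))\big\|$; taking the supremum gives $w_e\big(f^2(|\mathbf{B}|)+ig^2(|\mathbf{B^*}|)\big) \le \sqrt{\big\|\sum_k(f^4(|B_k|)+g^4(|B_k^*|))\big\|}$, and substituting into the first bound finishes the proof. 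The only real subtlety — the ``main obstacle'' — is the bookkeeping around the non-self-adjoint tuple $f^2(|\mathbf{B}|)+ig^2(|\mathbf{B^*}|)$: one must be careful that $\langle T_kx,x\rangle = \langle f^2(|B_k|)x,x\rangle + i\langle g^2(|B_k^*|)x,x\rangle$ has real and imaginary parts precisely the two nonnegative quantities above, so that its modulus squared is their sum of squares; everything else is a routine chain of Cauchy--Schwarz and convexity estimates.
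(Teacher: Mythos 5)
Your proposal is correct and follows essentially the same route as the paper: apply Lemma \ref{lm5} coordinatewise with $y=x$, use the AM--GM bound $\alpha\beta\le\tfrac12(\alpha^2+\beta^2)=\tfrac12|\alpha+i\beta|^2$ to package the product of the two positive quadratic forms as the modulus squared of $\langle(f^2(|B_k|)+ig^2(|B_k^*|))x,x\rangle$, and then invoke the McCarthy inequality with $p=2$ for the second bound. The only cosmetic difference is that you pull out $\max_k r(C_k)$ before the AM--GM step rather than after, which changes nothing.
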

\begin{proof}
Let $x\in \mathscr{H}$ with $\|x\|=1$. Then we get, 
\begin{eqnarray*}
\|\langle \mathbf{BC} x,x\rangle\|^2 &=&\sum_{k=1}^{d} |\langle B_kC_kx,x\rangle|^2\\
&\leq &\sum_{k=1}^{d} r^2(C_k)\|f(|B_k|)x\|^2\|g(|B_k^*|)x\|^2\,\,\,\,\,(\textit{using Lemma \ref{lm5}})\\
&=&\sum_{k=1}^{d} r^2(C_k)\langle f^2(|B_k|)x,x\rangle\langle g^2(|B_k^*|)x,x\rangle\\
&\leq& \frac12\sum_{k=1}^{d} r^2(C_k)\left(\langle f^2(|B_k|)x,x\rangle^2+\langle g^2(|B_k^*|)x,x\rangle^2\right)\\
&=& \frac12\sum_{k=1}^{d} r^2(C_k)|\langle f^2(|B_k|)x,x\rangle+i\langle g^2(|B_k^*|)x,x\rangle|^2\\
&\leq &\frac12\max_{k}\{ r^2(C_k)\} \sum_{k=1}^{d}|\langle (f^2(|B_k|)+i g^2(|B_k^*|))x,x\rangle|^2\\
&\leq&\frac12\max_{k}\{r^2(C_k)\}  w_e^2(f^2(|\mathbf{B}|)+ig^2(|\mathbf{B^*}|)).
\end{eqnarray*}
Therefore, taking supremum over all $x\in \mathscr{H}$, $\|x\|=1$, we obtain the first inequality.
Next, we see that 
\begin{eqnarray*}
&& w_e^2(f^2(|\mathbf{B}|)+ig^2(|\mathbf{B^*}|))\\
&=&\underset{\|x\|=1}\sup\sum_{k=1}^{d}\left(\langle f^2(|B_k|)x,x\rangle^2+\langle g^2(|B_k^*|)x,x\rangle^2\right) \\
&\leq&\underset{\|x\|=1}\sup\sum_{k=1}^{d}\left(\langle f^4(|B_k|)x,x\rangle+\langle g^4(|B_k^*|)x,x\rangle\right)\,\,\,(\textit{using Lemma \ref{lm1}})\\
&=&\underset{\|x\|=1}\sup\sum_{k=1}^{d}\langle (f^4(|B_k|)+ g^4(|B_k^*|))x,x\rangle\\
&=&\underset{\|x\|=1}\sup \left\langle \sum_{k=1}^{d}(f^4(|B_k|)+ g^4(|B_k^*|))x,x\right\rangle\\
&=& \left\|\sum_{k=1}^{d}(f^4(|B_k|)+ g^4(|B_k^*|))\right\|,
\end{eqnarray*}
which gives the second inequality.
\end{proof}

The inequalities in Theorem \ref{th9} include several Euclidean operator radius inequalities for $d$-tuple operators. Some of these are demonstrated in the following corollaries. 

\begin{cor}
Let   $\mathbf{A}=(A_1,A_2, \ldots,A_d)\in \mathbb{B}^d(\mathscr{H})$ and let $f,g: [0,\infty)\rightarrow [0,\infty)$ be continuous functions, where $f(\lambda)g(\lambda)=\lambda$ for all $\lambda\geq 0$. Then  
 \begin{eqnarray*}
 w_e(\mathbf{A})&\leq & \frac1{\sqrt{2}} \max_{k}\left\{\|A_k\|^{t}\right\}w_e\left(f^2(|\mathbf{A}|^{1-t})+ig^2(|\mathbf{A^*}|^{1-t})\right)\\
&\leq&\frac1{\sqrt{2}} \max_{k}\left\{\|A_k\|^{t}\right\} \sqrt{\left\|\sum_{k=1}^{d} f^4(|A_k|^{1-t})+g^4(|A_k^*|^{1-t})\right\|}\\
&\leq& \frac1{\sqrt{2}}\|\mathbf{A}\|^{t} \sqrt{\left\|\sum_{k=1}^{d} f^4(|A_k|^{1-t})+g^4(|A_k^*|^{1-t})\right\|},
  \end{eqnarray*}
 for all $t$, $0\leq t\leq 1.$
\label{th10}\end{cor}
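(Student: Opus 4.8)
The plan is to apply Theorem~\ref{th9} with the right substitution coming from the polar decomposition, exactly in the spirit of Corollaries~\ref{th2}--\ref{th4}. First I would write the polar decomposition $A_k = U_k|A_k|$ for each $k=1,\dots,d$, and then split $A_k = B_k C_k$ where $B_k = U_k|A_k|^{1-t}$ and $C_k = |A_k|^{t}$. The point of this splitting is twofold: it makes $\mathbf{B}\mathbf{C} = \mathbf{A}$, and it arranges the commutation hypothesis $|\mathbf{B}|\mathbf{C} = \mathbf{C}^*|\mathbf{B}|$ of Theorem~\ref{th9} to hold. Indeed, $|B_k| = |U_k|A_k|^{1-t}| = |A_k|^{1-t}$ (using that $U_k$ is a partial isometry matching the support of $|A_k|$), so $|B_k|C_k = |A_k|^{1-t}|A_k|^{t} = |A_k|$, which is self-adjoint and equals $C_k^*|B_k|$ since $C_k = |A_k|^t$ is self-adjoint. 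So the hypothesis is verified.

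Next I would track the quantities appearing on the right-hand side of Theorem~\ref{th9} under this substitution. For the constant, $r(C_k) = r(|A_k|^{t}) = \||A_k|^{t}\| = \|A_k\|^{t}$ since $|A_k|^{t}$ is positive (spectral radius equals norm for positive operators, and $\||A_k|^t\| = \||A_k|\|^t = \|A_k\|^t$). For the operator entries, $f^2(|B_k|) = f^2(|A_k|^{1-t})$ directly, while $g^2(|B_k^*|) = g^2(|A_k^*|^{1-t})$ because $|B_k^*| = |(U_k|A_k|^{1-t})^*| = ||A_k|^{1-t}U_k^*|$, and one computes $|B_k^*|^2 = U_k|A_k|^{2(1-t)}U_k^* = |A_k^*|^{2(1-t)}$ by the standard identity $U_k|A_k|^{2s}U_k^* = |A_k^*|^{2s}$ (this is exactly the identity already invoked in the proof of Corollary~\ref{th2}), hence $|B_k^*| = |A_k^*|^{1-t}$. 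Plugging these into the two inequalities of Theorem~\ref{th9} gives
$$ w_e(\mathbf{A}) \leq \frac1{\sqrt2}\max_k\{r(C_k)\}\, w_e\!\left(f^2(|\mathbf{B}|)+ig^2(|\mathbf{B^*}|)\right) = \frac1{\sqrt2}\max_k\{\|A_k\|^{t}\}\, w_e\!\left(f^2(|\mathbf{A}|^{1-t})+ig^2(|\mathbf{A^*}|^{1-t})\right),$$
and similarly the second displayed inequality of the corollary follows from the second inequality of Theorem~\ref{th9}.

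Finally, the third inequality of the corollary is the trivial consequence $\max_k\{\|A_k\|^{t}\} \leq \left(\max_k\|A_k\|\right)^{t} \leq \|\mathbf{A}\|^{t}$, using $\max_k\|A_k\| \leq \|\mathbf{A}\|$ (which follows from the definition of the Euclidean operator norm, since $\|A_k x\|^2 \leq \sum_j \|A_j x\|^2$) together with monotonicity of $\lambda \mapsto \lambda^{t}$ for $t\geq 0$. I expect the only genuinely delicate point to be the justification that $|B_k| = |A_k|^{1-t}$ and $|B_k^*| = |A_k^*|^{1-t}$ when $U_k$ is merely a partial isometry rather than a unitary — but this is handled exactly as in Corollary~\ref{th2} via the identity $U_k|A_k|^{2s}U_k^* = |A_k^*|^{2s}$ (valid for partial isometries arising from polar decomposition), and for $|B_k|$ by noting $|U_k|A_k|^{1-t}|^2 = |A_k|^{1-t}U_k^*U_k|A_k|^{1-t} = |A_k|^{2(1-t)}$ since $U_k^*U_k$ is the projection onto $\overline{\mathrm{range}}(|A_k|)$, which fixes $|A_k|^{1-t}x$. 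Everything else is routine substitution.
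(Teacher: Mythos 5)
Your proposal is correct and follows essentially the same route as the paper's own proof: apply Theorem~\ref{th9} with $B_k=U_k|A_k|^{1-t}$, $C_k=|A_k|^{t}$ from the polar decomposition, identify $r(C_k)=\|A_k\|^{t}$, $|B_k|=|A_k|^{1-t}$, $|B_k^*|=|A_k^*|^{1-t}$, and finish with $\max_k\|A_k\|\leq\|\mathbf{A}\|$. You are in fact somewhat more careful than the paper, which leaves the verification of the commutation hypothesis and the identities for $|B_k|$ and $|B_k^*|$ implicit.
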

\begin{proof}
Let $A_k=U_k|A_k|$ be the polar decomposition of $A_k$ for each $k=1,2,\ldots,d.$ Considering $B_k=U_k|A_k|^{1-t}$ and $C_k=|A_k|^t$ in Theorem \ref{th9}, we obtain 
 \begin{eqnarray*}
 w_e(\mathbf{A})&\leq&\frac{1}{\sqrt{2}} \max_{k}\{r(|A_k^t|)\}w_e\left(f^2\left(|\mathbf{A}|^{1-t}\right)+ig^2\left(|\mathbf{A^*}|^{1-t}\right)\right)\\
 &= & \frac{1}{\sqrt{2}} \max_{k}\{\|A_k\|^{t}\}w_e\left(f^2\left(|\mathbf{A}|^{1-t}\right)+ig^2\left(|\mathbf{A^*}|^{1-t}\right)\right)\\
&\leq&\frac{1}{\sqrt{2}}\max_{k}\{\|A_k\|^{t}\}\sqrt{\left\|\sum_{k=1}^{d} f^4\left(|A_k|^{1-t}\right)+g^4\left(|A_k^*|^{1-t}\right)\right\|}\\
&\leq&\frac{1}{\sqrt{2}} \|\mathbf{A}\|^{t}\sqrt{\left\|\sum_{i=1}^{d} f^4\left(|
A_k|^{1-t}\right)+g^4\left(|A_k^*|^{1-t}\right)\right\|}.
  \end{eqnarray*}
  Since $\|A_k\|\leq\|\mathbf{A}\|$ for all $ k=1,2,\ldots,d$, the last inequality follows easily. This completes the proof.
\end{proof}

\begin{remark} Suppose $\mathbf{A}=(A_1,A_2, \ldots,A_d)\in \mathbb{B}^d(\mathscr{H}).$\\
(i) Considering $f(\lambda)=\lambda^\alpha$ and $g(\lambda)=\lambda^{1-\alpha}$, $0\leq \alpha \leq 1,$ in Corollary \ref{th10}, we get
\begin{eqnarray*}
 w_e(\mathbf{A})&\leq & \frac1{\sqrt{2}} \max_{k}\{\|A_k\|^{t}\}w_e\left(|\mathbf{A}|^{2\alpha(1-t)}+i|\mathbf{A^*}|^{2(1-\alpha)(1-t)}\right)\\
&\leq&\frac1{\sqrt{2}} \max_{k}\{\|A_k\|^{t}\} \sqrt{ \left\| \sum_{k=1}^{d} \left(|A_k|^{4\alpha(1-t)}+|A_k^*|^{4(1-\alpha)(1-t)}\right)\right\|}\\
&\leq&\frac1{\sqrt2} \|\mathbf{A}\|^{t} \sqrt{\left\|\sum_{k=1}^{d} \left(|A_k|^{4\alpha(1-t)}+|A_k^*|^{4(1-\alpha)(1-t)}\right)\right\|},
  \end{eqnarray*} for all $\alpha,t$,  $0\leq \alpha,t \leq 1.$\\
  (ii) In particular, for $t=\frac12$,
  \begin{eqnarray*}
 w_e(\mathbf{A})&\leq & \frac1{\sqrt{2}} \max_{k}\left\{\|A_k\|^{1/2}\right\}w_e\left(|\mathbf{A}|^{\alpha}+i|\mathbf{A^*}|^{(1-\alpha)}\right)\\
&\leq&\frac1{\sqrt{2}} \max_{k}\{\|A_k\|^{1/2}\}\sqrt{\left\|\sum_{k=1}^{d} \left(|A_k|^{2\alpha}+|A_k^*|^{2(1-\alpha)}\right)\right\|}\\
&\leq&\frac1{\sqrt{2}} \|\mathbf{A}\|^{1/2} \sqrt{\left\|\sum_{k=1}^{d} \left(|A_k|^{2\alpha}+|A_k^*|^{2(1-\alpha)}\right)\right\|},
  \end{eqnarray*} 
for all $\alpha$, $0\leq \alpha \leq 1.$\\
  (iii) In particular, for $\alpha=\frac12$,
  \begin{eqnarray*}
 w_e(\mathbf{A})&\leq & \frac1{\sqrt{2}} \max_{k}\left\{\|A_k\|^{1/2}\right\}w_e(|\mathbf{A}|^{\frac12}+i|\mathbf{A^*}|^{\frac12})\\
&\leq&\frac1{\sqrt{2}} \max_{k} \left\{\|A_k\|^{1/2}\right\} \sqrt{\left\|\sum_{k=1}^{d} (|A_k|+|A_k^*|)\right\|}\\
&\leq&\frac1{\sqrt2} \|\mathbf{A}\|^{1/2}\sqrt{\left\|\sum_{k=1}^{d} (|A_k|+|A_k^*|)\right\|}.
  \end{eqnarray*}  
\end{remark}

Next, we obtain an upper bound for the Euclidean operator radius of product of two $d$-tuple operators.

\begin{theorem}
Let $\mathbf{B}=(B_1,B_2, \ldots,B_d)$,  $\mathbf{C}=(C_1,C_2, \ldots,C_d)\in \mathbb{B}^d(\mathscr{H}).$  Then  $$ w_e(\mathbf{B}\mathbf{C})\leq \frac1{\sqrt2} w_e(|\mathbf{C}|^2+i \mathbf{|B^*|}^2).$$
\label{th7}\end{theorem}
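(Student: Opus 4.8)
The plan is to bound $\|\langle \mathbf{BC}x,x\rangle\|^2$ pointwise for a unit vector $x$ and then take the supremum, mirroring the strategy already used for Theorem~\ref{th9}. For each $k$, write $\langle B_kC_kx,x\rangle=\langle C_kx,B_k^*x\rangle$ and apply the Cauchy--Schwarz inequality to obtain
$$|\langle B_kC_kx,x\rangle|^2\le \|C_kx\|^2\,\|B_k^*x\|^2=\langle |C_k|^2x,x\rangle\,\langle |B_k^*|^2x,x\rangle,$$
using $\|C_kx\|^2=\langle C_k^*C_kx,x\rangle=\langle |C_k|^2x,x\rangle$ and $\|B_k^*x\|^2=\langle B_kB_k^*x,x\rangle=\langle |B_k^*|^2x,x\rangle$. (Alternatively, one may note that $\begin{bmatrix}\mathbf{|C|}^2 & (\mathbf{BC})^*\\ \mathbf{BC} & \mathbf{|B^*|}^2\end{bmatrix}$ is positive, since its $k$-th entry equals $\begin{bmatrix}C_k^*\\ B_k\end{bmatrix}\begin{bmatrix}C_k & B_k^*\end{bmatrix}$, and then invoke Lemma~\ref{lm3} with $y=x$ to reach the same inequality.)

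Next I would apply the elementary estimate $ab\le \tfrac12(a^2+b^2)$ with $a=\langle |C_k|^2x,x\rangle$ and $b=\langle |B_k^*|^2x,x\rangle$, and then recognize $a^2+b^2=|a+ib|^2=\big|\langle (|C_k|^2+i|B_k^*|^2)x,x\rangle\big|^2$. Summing over $k=1,\dots,d$ gives
$$\|\langle \mathbf{BC}x,x\rangle\|^2\le \frac12\sum_{k=1}^{d}\big|\langle (|C_k|^2+i|B_k^*|^2)x,x\rangle\big|^2.$$
Since $|\mathbf{C}|^2+i\mathbf{|B^*|}^2=(|C_1|^2+i|B_1^*|^2,\dots,|C_d|^2+i|B_d^*|^2)$ is a $d$-tuple operator, the definition of the Euclidean operator radius shows the right-hand side is at most $\tfrac12\, w_e^2(|\mathbf{C}|^2+i\mathbf{|B^*|}^2)$. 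Taking the supremum over all unit vectors $x$ and then square roots yields $w_e(\mathbf{BC})\le \tfrac1{\sqrt2}\,w_e(|\mathbf{C}|^2+i\mathbf{|B^*|}^2)$.

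There is no serious obstacle here; the only point that needs a little care is the bookkeeping that converts the sum $a^2+b^2$ into the squared modulus of a single complex number, so that the Euclidean operator radius of the (generally non-self-adjoint) $d$-tuple $|\mathbf{C}|^2+i\mathbf{|B^*|}^2$ appears on the right. This is exactly the device that was used in the chain of inequalities proving Theorem~\ref{th9}, so the argument is short once that observation is in place.
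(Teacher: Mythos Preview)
Your proof is correct and follows essentially the same approach as the paper's: both write $\langle B_kC_kx,x\rangle=\langle C_kx,B_k^*x\rangle$, apply Cauchy--Schwarz, use $ab\le\tfrac12(a^2+b^2)$, rewrite $a^2+b^2=|a+ib|^2$, sum over $k$, and take the supremum. The alternative route you mention via the positivity of the block matrix and Lemma~\ref{lm3} is not used in the paper's proof of this theorem, but it leads to the same pointwise bound.
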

\begin{proof}
Let $x\in \mathscr{H}$ with $\|x\|=1.$ Then we have
\begin{eqnarray*}
\|\langle \mathbf{BC} x,x\rangle\|^2&=& \sum_{k=1}^{d}|\langle B_kC_k x,x\rangle|^2= \sum_{k=1}^{d}|\langle C_k x,B_k^*x\rangle|^2\\
&\leq&  \sum_{k=1}^{d}\| C_k x\|^2\|B_k^*x\|^2= \sum_{k=1}^{d}\langle |C_k|^2 x,x\rangle\langle |B_k^*|^2x,x\rangle\\
&\leq& \frac12  \sum_{k=1}^{d}\left(\langle |C_k|^2 x,x\rangle^2+\langle |B_k^*|^2x,x\rangle^2\right)\\
&=& \frac12  \sum_{k=1}^{d}|\langle |C_k|^2 x,x\rangle+i\langle |B_k^*|^2x,x\rangle|^2\\
&=&\frac12  \sum_{k=1}^{d}|\langle \left( |C_k|^2 +i |B_k^*|^2\right)x,x\rangle|^2\\
&\leq&  \frac12 w_e^2(|\mathbf{C}|^2+i \mathbf{|B^*|}^2).
\end{eqnarray*}
Therefore, taking supremum over $\|x\|=1$, we get desired result.
\end{proof}

Using Theorem \ref{th7}, we obtain the following bounds.

\begin{cor}
 Let $\mathbf{A}=(A_1,A_2, \ldots, A_d) \in \mathbb{B}^d(\mathscr{H})$, then 
\begin{eqnarray*}
w_e(\mathbf{A})&\leq&\frac1{\sqrt{2}} w_e(|\mathbf{A}|^{2t}+i|\mathbf{A^*}|^{2(1-t)})\\
&\leq& \frac1{\sqrt2} \left\|\sum_{k=1}^{d}\left( |A_k^*|^{4(1-t)}+|A_k|^{4t}\right)\right\|^{1/2},
\end{eqnarray*} 
for all $t,\, 0\leq t \leq 1.$
 In particular,
\begin{eqnarray*}
w_e(\mathbf{A})&\leq&\frac1{\sqrt2} w_e(|\mathbf{A}|+i|\mathbf{A}^*|)\\
&\leq& \frac1{\sqrt{2}} \left\|\sum_{k=1}^{d}\left( |A_k^*|^{2}+|A_k|^{2}\right)\right\|^{1/2}.
\end{eqnarray*} 
\label{th8}\end{cor}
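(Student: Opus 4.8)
The statement to prove is Corollary \ref{th8}, which follows from Theorem \ref{th7} by substituting appropriate operators via the polar decomposition, exactly as in the proofs of Corollaries \ref{th2}, \ref{th3}, \ref{th4}, and \ref{th10}.

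\medskip

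The plan is to apply Theorem \ref{th7} with a carefully chosen factorization $A_k = B_k C_k$. For each $k = 1, 2, \ldots, d$, let $A_k = U_k |A_k|$ be the polar decomposition, and set $B_k = U_k |A_k|^{1-t}$ and $C_k = |A_k|^t$, so that $\mathbf{B}\mathbf{C} = \mathbf{A}$. Then Theorem \ref{th7} immediately gives
$$ w_e(\mathbf{A}) = w_e(\mathbf{B}\mathbf{C}) \leq \frac{1}{\sqrt{2}} w_e\bigl(|\mathbf{C}|^2 + i |\mathbf{B^*}|^2\bigr).$$
The key computational step is to identify $|C_k|^2$ and $|B_k^*|^2$. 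We have $|C_k|^2 = C_k^* C_k = |A_k|^t |A_k|^t = |A_k|^{2t}$. For $B_k^*$, note that $B_k B_k^* = U_k |A_k|^{1-t} |A_k|^{1-t} U_k^* = U_k |A_k|^{2(1-t)} U_k^*$, and since $U_k |A_k|^{2s} U_k^* = |A_k^*|^{2s}$ (a standard identity for the polar decomposition, already used in the proof of Corollary \ref{th2}), we get $|B_k^*|^2 = B_k B_k^* = |A_k^*|^{2(1-t)}$. Substituting these into the bound yields
$$ w_e(\mathbf{A}) \leq \frac{1}{\sqrt{2}} w_e\bigl(|\mathbf{A}|^{2t} + i |\mathbf{A^*}|^{2(1-t)}\bigr),$$
which is the first inequality.

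\medskip

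For the second inequality, I would expand $w_e$ of the $d$-tuple $|\mathbf{A}|^{2t} + i |\mathbf{A^*}|^{2(1-t)}$ directly from its definition: for a unit vector $x$,
$$ \sum_{k=1}^d \bigl|\langle (|A_k|^{2t} + i|A_k^*|^{2(1-t)})x, x\rangle\bigr|^2 = \sum_{k=1}^d \Bigl(\langle |A_k|^{2t}x,x\rangle^2 + \langle |A_k^*|^{2(1-t)}x,x\rangle^2\Bigr),$$
using that both $\langle |A_k|^{2t}x,x\rangle$ and $\langle |A_k^*|^{2(1-t)}x,x\rangle$ are real. Then apply McCarthy's inequality (Lemma \ref{lm1}) with $p = 2$ to bound $\langle |A_k|^{2t}x,x\rangle^2 \leq \langle |A_k|^{4t}x,x\rangle$ and $\langle |A_k^*|^{2(1-t)}x,x\rangle^2 \leq \langle |A_k^*|^{4(1-t)}x,x\rangle$, so the sum is at most $\langle \sum_{k=1}^d (|A_k|^{4t} + |A_k^*|^{4(1-t)})x, x\rangle \leq \|\sum_{k=1}^d (|A_k|^{4t} + |A_k^*|^{4(1-t)})\|$. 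Taking the supremum over unit vectors $x$ gives $w_e^2(|\mathbf{A}|^{2t} + i|\mathbf{A^*}|^{2(1-t)}) \leq \|\sum_{k=1}^d (|A_k^*|^{4(1-t)} + |A_k|^{4t})\|$, and combining with the first inequality completes the general bound. The particular case is just $t = \tfrac{1}{2}$, for which $|A_k|^{2t} = |A_k|$, $|A_k^*|^{2(1-t)} = |A_k^*|$, and the exponents $4t, 4(1-t)$ both become $2$.

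\medskip

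I do not anticipate any serious obstacle: the argument is a direct substitution into an already-established theorem followed by a short McCarthy-inequality estimate, mirroring the pattern of the earlier corollaries in this section. The only point requiring a moment of care is verifying $U_k|A_k|^{2(1-t)}U_k^* = |A_k^*|^{2(1-t)}$, but this is the same identity invoked in Corollary \ref{th2} and holds because $f(|A^*|) = U f(|A|) U^*$ for continuous $f$ with $f(0)=0$ when $A = U|A|$ is the polar decomposition; alternatively one can note $|B_k^*|^2 = B_kB_k^*$ directly and that for $\mathbf{B}\mathbf{C}$ with this choice the relevant quantity is exactly what appears. This is handled exactly as in the proof of Corollary \ref{th2}.
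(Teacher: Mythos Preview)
Your proposal is correct and follows exactly the paper's approach: the paper's proof simply says to take the polar decomposition $A_k = U_k|A_k|$, set $B_k = U_k|A_k|^{1-t}$ and $C_k = |A_k|^t$, and apply Theorem \ref{th7}. Your additional details---the identification $|B_k^*|^2 = U_k|A_k|^{2(1-t)}U_k^* = |A_k^*|^{2(1-t)}$ and the McCarthy-inequality estimate for the second inequality---are precisely what the paper leaves implicit (the latter mirroring the argument already spelled out in the proof of Theorem \ref{th9}).
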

\begin{proof}
Let $A_k=U_k|A_k|$ be the polar decomposition of $A_k$ for each $k=1,2,\ldots,d.$ Considering $B_k=U_k|A_k|^{1-t}$ and $C_k=|A_k|^t$  in Theorem \ref{th7}, we get the desired bounds.
\end{proof}

\section{Euclidean operator radius of operator matrices}




In this section, first we develop an upper bound for the Euclidean operator radius of $n\times n$ operator matrix whose entries are $d$-tuple operators.

\begin{theorem}
Let $\mathbb{A}=\begin{bmatrix}
\mathbf{A_{ij}}
\end{bmatrix}_{n\times n}$ be an $n \times n$ operator matrix, where  $\mathbf{A_{ij}}\in\mathbb{B}^d(\mathscr{H}),$ $1\leq i,j\leq n.$ If $f, g : [0,\infty) \rightarrow [0,\infty)$ are continuous functions, satisfy $f(\lambda)g(\lambda) = \lambda,$ for all $\lambda \in [0,\infty)$, then $$ w_e\left(
\mathbb{A}\right)\leq w \left(\begin{bmatrix}
a_{ij}
\end{bmatrix}_{n\times n}\right),$$ where $ a_{ij}=\begin{cases}w_e(\mathbf{A_{ij}})\,\,\,\textit{ when i=j}\\
 \sqrt{w_e\left(f^2(|\mathbf{A_{ji}|})+g^2(|\mathbf{A_{ij}^*}|)\right)w_e\left(f^2(|\mathbf{A_{ij}|})+g^2(|\mathbf{A_{ji}^*}|)\right)}\,\,\,\textit{ when $i<j$}\\
 0\,\,\,\textit{ when $i>j$}.\end{cases}$
\label{them1}\end{theorem}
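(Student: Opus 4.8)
The plan is to test the defining supremum of $w_e(\mathbb A)$ against an arbitrary unit vector and to compare the outcome with the scalar matrix $\begin{bmatrix}a_{ij}\end{bmatrix}_{n\times n}$ evaluated at the vector built from the norms of the components. Fix $X=(x_1,\dots,x_n)\in\oplus_{i=1}^{n}\mathscr H$ with $\|X\|^2=\sum_{i=1}^{n}\|x_i\|^2=1$, write $\mathbb A=(\mathbb A^{1},\dots,\mathbb A^{d})$ with $\mathbb A^{k}=\begin{bmatrix}A_{ij}^{k}\end{bmatrix}_{n\times n}$, so that $\langle\mathbb A^{k}X,X\rangle=\sum_{i,j=1}^{n}\langle A_{ij}^{k}x_j,x_i\rangle$ and $\|\langle\mathbb A X,X\rangle\|=\big(\sum_{k=1}^{d}|\langle\mathbb A^{k}X,X\rangle|^2\big)^{1/2}$. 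The aim is to bound this last quantity by $\langle\begin{bmatrix}a_{ij}\end{bmatrix}_{n\times n}\tilde u,\tilde u\rangle$, where $\tilde u=(\|x_1\|,\dots,\|x_n\|)$ is a unit vector in $\mathbb R^{n}\subset\mathbb C^{n}$; since that inner product is at most $w(\begin{bmatrix}a_{ij}\end{bmatrix}_{n\times n})$, taking the supremum over $X$ will finish the proof.

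The core is a bound for each off-diagonal pair of blocks. For $i<j$ and each $k$, applying Lemma \ref{lm5} with $B=I$ (so the hypothesis $|A|B=B^{*}|A|$ holds trivially and $r(B)=1$) gives the mixed Schwarz estimates $|\langle A_{ij}^{k}x_j,x_i\rangle|\le\|f(|A_{ij}^{k}|)x_j\|\,\|g(|(A_{ij}^{k})^{*}|)x_i\|$ and $|\langle A_{ji}^{k}x_i,x_j\rangle|\le\|f(|A_{ji}^{k}|)x_i\|\,\|g(|(A_{ji}^{k})^{*}|)x_j\|$. Adding these and applying the Cauchy--Schwarz inequality in $\mathbb R^{2}$ — grouping the two factors containing $x_j$ together and the two factors containing $x_i$ together — yields, for $i<j$,
$$|\langle A_{ij}^{k}x_j,x_i\rangle|+|\langle A_{ji}^{k}x_i,x_j\rangle|\le\big\langle(\mathbf{T_2})_k x_j,x_j\big\rangle^{1/2}\big\langle(\mathbf{T_1})_k x_i,x_i\big\rangle^{1/2},$$
where $\mathbf{T_2}=f^2(|\mathbf{A_{ij}}|)+g^2(|\mathbf{A_{ji}^{*}}|)$ and $\mathbf{T_1}=f^2(|\mathbf{A_{ji}}|)+g^2(|\mathbf{A_{ij}^{*}}|)$ are $d$-tuples of positive operators with $k$-th components $(\mathbf{T_2})_k=f^2(|A_{ij}^{k}|)+g^2(|(A_{ji}^{k})^{*}|)$ and $(\mathbf{T_1})_k=g^2(|(A_{ij}^{k})^{*}|)+f^2(|A_{ji}^{k}|)$, and $a_{ij}=\sqrt{w_e(\mathbf{T_1})\,w_e(\mathbf{T_2})}$.

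Next I would assemble the estimate for $\|\langle\mathbb A X,X\rangle\|$. Starting from $|\langle\mathbb A^{k}X,X\rangle|\le\sum_{i=1}^{n}|\langle A_{ii}^{k}x_i,x_i\rangle|+\sum_{i<j}\big(|\langle A_{ij}^{k}x_j,x_i\rangle|+|\langle A_{ji}^{k}x_i,x_j\rangle|\big)$, I apply the triangle (Minkowski) inequality for the $\ell^2$-norm in the index $k$ to pull the sums over $i,j$ outside, obtaining
$$\|\langle\mathbb A X,X\rangle\|\le\sum_{i=1}^{n}\Big(\sum_{k=1}^{d}|\langle A_{ii}^{k}x_i,x_i\rangle|^2\Big)^{1/2}+\sum_{i<j}\Big(\sum_{k=1}^{d}\langle(\mathbf{T_2})_k x_j,x_j\rangle\langle(\mathbf{T_1})_k x_i,x_i\rangle\Big)^{1/2}.$$
The $i$-th diagonal term is at most $w_e(\mathbf{A_{ii}})\|x_i\|^2=a_{ii}\|x_i\|^2$ straight from the definition of $w_e$. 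For each off-diagonal term, the Cauchy--Schwarz inequality in $k$ together with $\sum_k\langle(\mathbf{T_2})_k x_j,x_j\rangle^2\le w_e(\mathbf{T_2})^2\|x_j\|^4$ and the analogous bound for $\mathbf{T_1}$ give at most $\sqrt{w_e(\mathbf{T_1})\,w_e(\mathbf{T_2})}\,\|x_i\|\|x_j\|=a_{ij}\|x_i\|\|x_j\|$. Since $a_{ij}=0$ for $i>j$, the total equals $\sum_{i\le j}a_{ij}\|x_i\|\|x_j\|=\langle\begin{bmatrix}a_{ij}\end{bmatrix}_{n\times n}\tilde u,\tilde u\rangle\le w(\begin{bmatrix}a_{ij}\end{bmatrix}_{n\times n})$, and taking the supremum over unit $X$ completes the proof.

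The only genuinely delicate points are in the second step: the grouping in the $\mathbb R^{2}$ Cauchy--Schwarz inequality must be chosen so that each resulting square root depends on only one of $x_i,x_j$, and it is exactly this constraint that forces the ``crossed'' combinations $f^2(|\mathbf{A_{ij}}|)+g^2(|\mathbf{A_{ji}^{*}}|)$ and $f^2(|\mathbf{A_{ji}}|)+g^2(|\mathbf{A_{ij}^{*}}|)$ to appear inside $w_e$; and the order of operations in the third step — Minkowski over $k$ must be carried out \emph{before} Cauchy--Schwarz over $k$ — so that the block indices can be separated without destroying the product structure needed to recognize $a_{ij}$. Everything else is a routine appeal to the definition of $w_e$.
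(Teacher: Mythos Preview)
Your proof is correct and follows essentially the same route as the paper's: split the quadratic form into diagonal and paired off-diagonal blocks, apply the mixed Schwarz inequality (Lemma~\ref{lm5}) componentwise, use Cauchy--Schwarz in $\mathbb{R}^2$ to separate the $x_i$- and $x_j$-dependence into the crossed tuples $\mathbf{T_1},\mathbf{T_2}$, then Cauchy--Schwarz over $k$ to extract the $w_e$ factors, and finally recognize the whole sum as $\langle[a_{ij}]\tilde u,\tilde u\rangle\le w([a_{ij}])$. The only cosmetic difference is that you make the Minkowski step over $k$ explicit, whereas the paper phrases the same inequality as the triangle inequality for the $\mathbb{C}^d$-norm $\|\langle\mathbf{A_{ij}}x_j,x_i\rangle\|$.
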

\begin{proof}
Let	$\mathbf{A_{ij}}=(A_{ij}^1,A_{ij}^2,\ldots,A_{ij}^d)\in\mathbb{B}^d(\mathscr{H}),$ $1\leq i,j\leq n,$ and 
	 $u=(x_1,x_2,\ldots,x_n)\in\oplus_{i=1}^{n}\mathscr{H}$ with $\|u\|=1$, i.e., $\|x_1\|^2+\|x_2\|^2+\ldots+\|x_n\|^2=1.$ Now,
\begin{eqnarray}
&&\left\|\langle\mathbb{A}u,u\rangle\right\|=\left\|\sum_{i,j=1}^{n}\left\langle \mathbf{A_{ij}} x_j,x_i\right\rangle\right\|\nonumber\\
&\leq& \left\|\sum_{i=1}^{n}\left\langle \mathbf{A_{ii}} x_i,x_i\right\rangle\right\|+\left\|\sum_{\underset{i\ne j}{i,j=1}}^{n}\left\langle \mathbf{A_{ij}} x_j,x_i\right\rangle\right\|\nonumber\\
&\leq& \sum_{i=1}^{n}\left\|\left\langle \mathbf{A_{ii}} x_i,x_i\right\rangle\right\|+\sum_{\underset{i< j}{i,j=1}}^{n}\left\|\left\langle \mathbf{A_{ij}} x_j,x_i\right\rangle+\left\langle \mathbf{A_{ji}} x_i,x_j\right\rangle\right\|\nonumber\\
&\leq& \sum_{i=1}^{n}\left\|\left\langle \mathbf{A_{ii}} x_i,x_i\right\rangle\right\|+\sum_{\underset{i< j}{i,j=1}}^{n}\left(\sum_{k=1}^{d}\left|\left\langle A_{ij}^{k} x_j,x_i\right\rangle+\left\langle A_{ji}^{k} x_i,x_j\right\rangle\right|^2\right)^\frac12\nonumber\\
&\leq& \sum_{i=1}^{n}\left\|\left\langle \mathbf{A_{ii}} x_i,x_i\right\rangle\right\|+\sum_{\underset{i< j}{i,j=1}}^{n}\left(\sum_{k=1}^{d}\left(\left|\left\langle A_{ij}^{k} x_j,x_i\right\rangle\right|+\left|\left\langle A_{ji}^{k} x_i,x_j\right\rangle\right|˘\right)^2\right)^\frac12\nonumber\\
&\leq& \sum_{i=1}^{n}w_e(\mathbf{A_{ii}})\left\|x_i\right\|^2\nonumber\\
&+&\sum_{\underset{i< j}{i,j=1}}^{n}\left(\sum_{k=1}^{d}\left(\left\|f(| A_{ij}^{k}|) x_j\right\|\left\|g(|A_{ij}^{k^*}|)x_i\right\|+\left\|f(| A_{ji}^{k}|) x_i\right\|\left\|g(|A_{ji}^{k^*}|)x_j\right\|\right)^2\right)^\frac12
\label{eq31}\end{eqnarray}
where the last inequality follows by using Lemma \ref{lm5}. Now, by Cauchy-Schwarz inequality we get 
\begin{eqnarray*}
&&\left(\sum_{k=1}^{d}\left(\left\|f(| A_{ij}^{k}|) x_j\right\|\left\|g(|A_{ij}^{k^*}|)x_i\right\|+\left\|f(| A_{ji}^{k}|) x_i\right\|\left\|g(|A_{ji}^{k^*}|)x_j\right\|\right)^2\right)^\frac12\\
&\leq&\left(\sum_{k=1}^{d}\left\langle \left(f^2(| A_{ij}^{k}|)+g^2(|A_{ji}^{k^*}|)\right)x_j, x_j\right\rangle\left\langle \left(f^2(| A_{ji}^{k}|)+g^2(|A_{ij}^{k^*}|)\right)x_i, x_i\right\rangle\right)^\frac12\\
&\leq&\left(\sum_{k=1}^{d}\left|\left\langle \left(f^2(| A_{ij}^{k}|)+g^2(|A_{ji}^{k^*}|)\right)x_j, x_j\right\rangle\right|^2\right)^\frac14\left(\sum_{k=1}^{d} \left|\left\langle\left(f^2(| A_{ji}^{k}|)+g^2(|A_{ij}^{k^*}|)\right)x_i, x_i\right\rangle\right|^2\right)^\frac14\\
&&\,\,\,\,\,\,\,(\textit{using Cauchy-Schwarz inequality})\\
&\leq& w_e^\frac12\left(f^2(|\mathbf{A_{ij}|})+g^2(|\mathbf{A_{ji}^*}|)\right)w_e^\frac12\left(f^2(|\mathbf{A_{ji}|})+g^2(|\mathbf{A_{ij}^*}|)\right)\|x_j\|\|x_i\|.
\end{eqnarray*}
Hence, from (\ref{eq31}), we obtain that
\begin{eqnarray*}
\left\|\langle\mathbb{A}u,u\rangle\right\|&\leq&\sum_{i=1}^{n} w_e(\mathbf{A_{ii}})\|x_i\|^2\\
&+&\sum_{\underset{i< j}{i,j=1}}^{n} w_e^\frac12\left(f^2(|\mathbf{A_{ij}|})+g^2(|\mathbf{A_{ji}^*}|)\right)w_e^\frac12\left(f^2(|\mathbf{A_{ji}|})+g^2(|\mathbf{A_{ij}^*}|)\right)\|x_{j}\|\|x_{i}\|\\
&=&\left\langle A|u|,|u|\right\rangle,
\end{eqnarray*}
 where $|u|=\begin{bmatrix}
 \|x_1\|  \\
 \|x_2\|  \\
 \vdots \\
 \|x_n\|
 \end{bmatrix}\in\mathbb{C}^n$ is an unit vector and $A=\begin{bmatrix}
 a_{ij}
 \end{bmatrix}_{n\times n}$ is an $n\times n$ complex matrix with 
$ a_{ij}=\begin{cases}w_e(\mathbf{A_{ij}})\,\,\,\textit{ when i=j}\\
 w_e^\frac12\left(f^2(|\mathbf{A_{ji}|})+g^2(|\mathbf{A_{ij}^*}|)\right)w_e^\frac12\left(f^2(|\mathbf{A_{ij}|})+g^2(|\mathbf{A_{ji}^*}|)\right)\,\,\,\textit{ when $i<j$}\\
 0\,\,\,\textit{ when $i>j$}.\end{cases}$\\
Therefore, \begin{eqnarray*}
	\left\|\langle\mathbb{A}u,u\rangle\right\|&\leq&\left\langle A|u|,|u|\right\rangle \,\leq \, w(A),
\end{eqnarray*}
holds for all $u\in \oplus_{i=1}^n\mathscr H$ with $\|u\|=1.$
 This completes the proof. 
\end{proof}

By Considering $ f(\lambda)=\lambda^\alpha $ and $ g(\lambda)=\lambda^{(1-\alpha)}$, $ 0\leq \alpha \leq 1 $ in Theorem \ref{them1}, we obtain the following corollary.

\begin{cor}
Let $\mathbb{A}=\begin{bmatrix}
\mathbf{A_{ij}}
\end{bmatrix}_{n\times n}$ be an $n\times n$ operator matrix, where  $\mathbf{A_{ij}}\in\mathbb{B}^d(\mathscr{H}),$ $1\leq i,j\leq n.$ Then $$ w_e\left(
\mathbb{A}\right)\leq w \left(\begin{bmatrix}
b_{ij}
\end{bmatrix}_{n\times n}\right),$$ where $ b_{ij}=\begin{cases}w_e(\mathbf{A_{ij}})\,\,\,\textit{ when i=j}\\
 w_e^\frac12\left(|\mathbf{A_{ji}|^{2\alpha}}+|\mathbf{A_{ij}^*}|^{2(1-\alpha)}\right)w_e^\frac12\left(|\mathbf{A_{ij}|^{2\alpha}}+|\mathbf{A_{ji}^*}|^{2(1-\alpha)}\right)\,\,\,\textit{ when $i<j$}\\
 0\,\,\,\textit{ when $i>j$},\end{cases}$ \\
 for all $\alpha,$  $0\leq \alpha \leq 1.$ In particular,
 $$ w_e\left(
\mathbb{A}\right)\leq w \left(\begin{bmatrix}
b'_{ij}
\end{bmatrix}_{n\times n}\right),$$  where $ b'_{ij}=\begin{cases}w_e(\mathbf{A_{ij}})\,\,\,\textit{ when i=j}\\
 \sqrt{w_e\left(|\mathbf{A_{ji}|}+|\mathbf{A_{ij}^*}|\right)w_e\left(|\mathbf{A_{ij}|}+|\mathbf{A_{ji}^*}|\right)}\,\,\,\textit{ when $i<j$}\\
 0\,\,\,\textit{ when $i>j$}.\end{cases}$
\label{cor1}\end{cor}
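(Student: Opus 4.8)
The plan is to derive Corollary~\ref{cor1} directly from Theorem~\ref{them1} by specializing the pair of functions $(f,g)$. Since Theorem~\ref{them1} holds for \emph{every} pair of non-negative continuous functions on $[0,\infty)$ satisfying $f(\lambda)g(\lambda)=\lambda$, the natural choice is $f(\lambda)=\lambda^{\alpha}$ and $g(\lambda)=\lambda^{1-\alpha}$ for a fixed $\alpha\in[0,1]$; this clearly satisfies $f(\lambda)g(\lambda)=\lambda^{\alpha}\lambda^{1-\alpha}=\lambda$ and both functions are non-negative and continuous on $[0,\infty)$. First I would substitute this choice into the formula for the off-diagonal entries $a_{ij}$ (for $i<j$) from Theorem~\ref{them1}: we have $f^2(|\mathbf{A_{ji}}|)=|\mathbf{A_{ji}}|^{2\alpha}$, $g^2(|\mathbf{A_{ij}^*}|)=|\mathbf{A_{ij}^*}|^{2(1-\alpha)}$, and similarly $f^2(|\mathbf{A_{ij}}|)=|\mathbf{A_{ij}}|^{2\alpha}$, $g^2(|\mathbf{A_{ji}^*}|)=|\mathbf{A_{ji}^*}|^{2(1-\alpha)}$, so that
$$a_{ij}=\sqrt{w_e\!\left(|\mathbf{A_{ji}}|^{2\alpha}+|\mathbf{A_{ij}^*}|^{2(1-\alpha)}\right)w_e\!\left(|\mathbf{A_{ij}}|^{2\alpha}+|\mathbf{A_{ji}^*}|^{2(1-\alpha)}\right)}=b_{ij},$$
while the diagonal entries $a_{ii}=w_e(\mathbf{A_{ii}})=b_{ii}$ and the lower-triangular entries remain $0$ unchanged. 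This immediately yields the first displayed inequality $w_e(\mathbb{A})\le w\!\left(\left[b_{ij}\right]_{n\times n}\right)$.

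For the ``in particular'' statement, I would further specialize to $\alpha=\tfrac12$, which gives $2\alpha=1$ and $2(1-\alpha)=1$, so that $|\mathbf{A_{ji}}|^{2\alpha}+|\mathbf{A_{ij}^*}|^{2(1-\alpha)}=|\mathbf{A_{ji}}|+|\mathbf{A_{ij}^*}|$ and likewise for the other term. Hence $b_{ij}$ with $\alpha=\tfrac12$ becomes exactly $b'_{ij}=\sqrt{w_e\!\left(|\mathbf{A_{ji}}|+|\mathbf{A_{ij}^*}|\right)w_e\!\left(|\mathbf{A_{ij}}|+|\mathbf{A_{ji}^*}|\right)}$ for $i<j$, with the diagonal and lower-triangular entries as before, establishing $w_e(\mathbb{A})\le w\!\left(\left[b'_{ij}\right]_{n\times n}\right)$.

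There is essentially no obstacle here: the corollary is a pure specialization, and the only thing to be careful about is the bookkeeping of exponents — making sure that $f^2$ and $g^2$ are correctly read off as $\lambda^{2\alpha}$ and $\lambda^{2(1-\alpha)}$ rather than, say, $\lambda^{\alpha^2}$ — and keeping the indices $i<j$ versus $j<i$ matched correctly with the positions of $\mathbf{A_{ij}}$ and $\mathbf{A_{ji}}$ inside the two $w_e$ factors. I would present the proof in just two or three sentences: state the substitution $f(\lambda)=\lambda^{\alpha}$, $g(\lambda)=\lambda^{1-\alpha}$, verify $f(\lambda)g(\lambda)=\lambda$, note that this transforms the entries $a_{ij}$ of Theorem~\ref{them1} into the entries $b_{ij}$, and then set $\alpha=\tfrac12$ to obtain $b'_{ij}$.
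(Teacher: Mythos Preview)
Your proposal is correct and matches the paper's approach exactly: the paper simply states that Corollary~\ref{cor1} follows by considering $f(\lambda)=\lambda^{\alpha}$ and $g(\lambda)=\lambda^{1-\alpha}$, $0\leq\alpha\leq 1$, in Theorem~\ref{them1}, and the particular case is the specialization $\alpha=\tfrac12$.
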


Since $w_e(\mathbf{A})\leq \|\mathbf{A}\|$  for every $\mathbf{A} \in \mathbb{B}^d(\mathscr{H})$ (see \eqref{T1}) and $w\left([a_{ij}]_{n\times n}\right)\leq w\left([b_{ij}]_{n\times n}\right),$ when $0\leq a_{ij}\leq b_{ij}$ for all $i,j$, the following corollaries are immediate from Theorem \ref{them1} and Corollary \ref{cor1}, respectively.

\begin{cor}
Let $\mathbb{A}=\begin{bmatrix}
\mathbf{A_{ij}}
\end{bmatrix}_{n\times n}$  be an $n\times n$ operator matrix, where $\mathbf{A_{ij}}\in\mathbb{B}^d(\mathscr{H}),$ $1\leq i,j\leq n.$ If $f, g : [0,\infty) \rightarrow [0,\infty)$ are continuous functions, satisfy $f(\lambda)g(\lambda) = \lambda,$ for all $\lambda \in [0,\infty)$, then $$ w_e\left(
\mathbb{A}\right)\leq w \left(\begin{bmatrix}
c_{ij}
\end{bmatrix}_{n\times n}\right),$$ where $ c_{ij}=\begin{cases}w_e(\mathbf{A_{ij}})\,\,\,\textit{ when i=j}\\
 \left\|f^2(|\mathbf{A_{ji}|})+g^2(|\mathbf{A_{ij}^*}|)\right\|^\frac12\left\|f^2(|\mathbf{A_{ij}|})+g^2(|\mathbf{A_{ji}^*}|)\right\|^\frac12\,\,\,\textit{ when $i<j$}\\
 0\,\,\,\textit{ when $i>j$}.\end{cases}$
\label{cor2}\end{cor}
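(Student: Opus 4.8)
The plan is to read Corollary \ref{cor2} as a direct simplification of Theorem \ref{them1}, obtained by replacing each off-diagonal entry $a_{ij}$ by something provably larger and then invoking monotonicity of the numerical radius on the cone of entrywise-nonnegative matrices. Concretely, for $i<j$ the entry appearing in Theorem \ref{them1} is
\[
a_{ij}=\sqrt{w_e\!\left(f^2(|\mathbf{A_{ji}}|)+g^2(|\mathbf{A_{ij}^*}|)\right)\,w_e\!\left(f^2(|\mathbf{A_{ij}}|)+g^2(|\mathbf{A_{ji}^*}|)\right)},
\]
while Corollary \ref{cor2} uses
\[
c_{ij}=\sqrt{\left\|f^2(|\mathbf{A_{ji}}|)+g^2(|\mathbf{A_{ij}^*}|)\right\|\,\left\|f^2(|\mathbf{A_{ij}}|)+g^2(|\mathbf{A_{ji}^*}|)\right\|}.
\]
So the first step is simply to quote the inequality $w_e(\mathbf{X})\le\|\mathbf{X}\|$ for every $\mathbf{X}\in\mathbb{B}^d(\mathscr H)$, which is the right-hand half of \eqref{T1}. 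Applying it to $\mathbf{X}=f^2(|\mathbf{A_{ji}}|)+g^2(|\mathbf{A_{ij}^*}|)$ and to $\mathbf{X}=f^2(|\mathbf{A_{ij}}|)+g^2(|\mathbf{A_{ji}^*}|)$ gives $a_{ij}\le c_{ij}$ for all $i<j$; the diagonal entries and the (zero) lower-triangular entries are literally unchanged, so $0\le a_{ij}\le c_{ij}$ for all $i,j$.

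The second step is the monotonicity fact that is already invoked in the paragraph preceding the corollary: if $0\le a_{ij}\le b_{ij}$ for all $i,j$ then $w\!\left([a_{ij}]_{n\times n}\right)\le w\!\left([b_{ij}]_{n\times n}\right)$. I would either cite this as stated in the excerpt or, if a one-line justification is wanted, note that for a matrix $M=[m_{ij}]$ with nonnegative entries one has $w(M)=\sup_{\|\xi\|=1}|\langle M\xi,\xi\rangle|$, and since $|\langle M\xi,\xi\rangle|\le\sum_{i,j}m_{ij}|\xi_i||\xi_j|=\langle M|\xi|,|\xi|\rangle$ with $|\xi|=(|\xi_1|,\dots,|\xi_n|)$ again a unit vector, the supremum is attained (and is monotone) on entrywise-nonnegative test vectors; then $0\le a_{ij}\le b_{ij}$ forces $\langle A|\xi|,|\xi|\rangle\le\langle B|\xi|,|\xi|\rangle$ pointwise, hence the claimed inequality between numerical radii. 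Chaining the two steps yields
\[
w_e(\mathbb{A})\ \le\ w\!\left([a_{ij}]_{n\times n}\right)\ \le\ w\!\left([c_{ij}]_{n\times n}\right),
\]
which is exactly Corollary \ref{cor2}.

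There is essentially no genuine obstacle here: the corollary is a softening of Theorem \ref{them1}, and everything needed — Theorem \ref{them1} itself, the bound $w_e(\cdot)\le\|\cdot\|$ from \eqref{T1}, and the entrywise monotonicity of the numerical radius on nonnegative matrices — is already available in the excerpt. The only point that deserves a moment of care is making sure the substitution is applied to the correct pair of $d$-tuple operators in each factor of $a_{ij}$ (the cross-indexed combinations $f^2(|\mathbf{A_{ji}}|)+g^2(|\mathbf{A_{ij}^*}|)$ and $f^2(|\mathbf{A_{ij}}|)+g^2(|\mathbf{A_{ji}^*}|)$ must be matched to the same combinations inside the operator norms of $c_{ij}$), and that these objects are bona fide elements of $\mathbb{B}^d(\mathscr H)$ so that \eqref{T1} indeed applies to them. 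Once that bookkeeping is checked, the proof is a two-line deduction, and I would present it as such: "Apply $w_e(\mathbf{X})\le\|\mathbf{X}\|$ to each factor of $a_{ij}$ to get $a_{ij}\le c_{ij}$, then use monotonicity of the numerical radius of nonnegative matrices together with Theorem \ref{them1}."
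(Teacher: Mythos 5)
Your proposal is correct and is exactly the paper's argument: the authors derive Corollary \ref{cor2} from Theorem \ref{them1} by applying $w_e(\mathbf{X})\leq\|\mathbf{X}\|$ (from \eqref{T1}) to each factor of the off-diagonal entries and then invoking the entrywise monotonicity of the numerical radius on nonnegative matrices, just as you do. Your extra justification of the monotonicity fact is a welcome but inessential elaboration.
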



\begin{cor}
Let $\mathbb{A}=\begin{bmatrix}
\mathbf{A_{ij}}
\end{bmatrix}_{n\times n}$  be an $n\times n$ operator matrix, where $\mathbf{A_{ij}}\in\mathbb{B}^d(\mathscr{H}),$ $1\leq i,j\leq n.$ Then $$ w_e\left(
\mathbb{A}\right)\leq w \left(\begin{bmatrix}
d_{ij}
\end{bmatrix}_{n\times n}\right),$$ where $ d_{ij}=\begin{cases}w_e(\mathbf{A_{ij}})\,\,\,\textit{ when i=j}\\
 \left\||\mathbf{A_{ji}|^{2\alpha}}+|\mathbf{A_{ij}^*}|^{2(1-\alpha)}\right\|^\frac12\left\||\mathbf{A_{ij}|^{2\alpha}}+|\mathbf{A_{ji}^*}|^{2(1-\alpha)}\right\|^\frac12\,\,\,\textit{ when $i<j$}\\
 0\,\,\,\textit{ when $i>j$},\end{cases}$\\
  for all $\alpha,\, 0\leq \alpha\leq 1.$
  In particular, 
  $$ w_e\left(
\mathbb{A}\right)\leq w \left(\begin{bmatrix}
d'_{ij}
\end{bmatrix}_{n\times n}\right),$$ where $ d'_{ij}=\begin{cases}w_e(\mathbf{A_{ij}})\,\,\,\textit{ when i=j}\\
 \left\||\mathbf{A_{ji}|}+|\mathbf{A_{ij}^*}|\right\|^\frac12\left\||\mathbf{A_{ij}|}+|\mathbf{A_{ji}^*}|\right\|^\frac12\,\,\,\textit{ when $i<j$}\\
 0\,\,\,\textit{ when $i>j$}.\end{cases}$
\label{cor3}\end{cor}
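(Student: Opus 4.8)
The plan is to obtain Corollary \ref{cor3} as an immediate consequence of Corollary \ref{cor1}, by replacing the Euclidean operator radius of each off-diagonal block by its (larger) Euclidean operator norm. First I would recall from \eqref{T1} that $w_e(\mathbf{T})\leq\|\mathbf{T}\|$ for every $\mathbf{T}\in\mathbb{B}^d(\mathscr{H})$. Applying this to the positive $d$-tuples $|\mathbf{A_{ji}}|^{2\alpha}+|\mathbf{A_{ij}^*}|^{2(1-\alpha)}$ and $|\mathbf{A_{ij}}|^{2\alpha}+|\mathbf{A_{ji}^*}|^{2(1-\alpha)}$ shows that the scalar $b_{ij}$ appearing in Corollary \ref{cor1} is dominated by $d_{ij}$ for every pair $i<j$; on the diagonal $b_{ii}=d_{ii}=w_e(\mathbf{A_{ii}})$, and both quantities vanish when $i>j$. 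Hence $0\leq b_{ij}\leq d_{ij}$ entrywise.

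Next I would invoke the entrywise monotonicity of the numerical radius on matrices with nonnegative real entries: if $0\leq a_{ij}\leq b_{ij}$ for all $i,j$, then $w([a_{ij}]_{n\times n})\leq w([b_{ij}]_{n\times n})$. This is precisely the fact already recorded in the remark preceding the corollary; it holds because for a unit vector $u=(u_1,\dots,u_n)\in\mathbb{C}^n$ one has $|\langle[a_{ij}]u,u\rangle|\leq\sum_{i,j}a_{ij}|u_j||u_i|\leq\sum_{i,j}b_{ij}|u_j||u_i|=\langle[b_{ij}]|u|,|u|\rangle\leq w([b_{ij}])$, where $|u|=(|u_1|,\dots,|u_n|)$ is again a unit vector, and then one takes the supremum over $u$. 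Combining this with the bound $w_e(\mathbb{A})\leq w([b_{ij}]_{n\times n})$ from Corollary \ref{cor1} gives $w_e(\mathbb{A})\leq w([d_{ij}]_{n\times n})$, which is the first assertion. The ``in particular'' statement is then simply the case $\alpha=\tfrac12$, for which $|\mathbf{A_{ji}}|^{2\alpha}+|\mathbf{A_{ij}^*}|^{2(1-\alpha)}=|\mathbf{A_{ji}}|+|\mathbf{A_{ij}^*}|$ and likewise for the companion term, so that $d_{ij}$ becomes $d'_{ij}$.

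There is essentially no obstacle here: the whole argument is a short reduction to Corollary \ref{cor1} via the norm bound \eqref{T1} and the monotonicity of $w(\cdot)$ for entrywise-nonnegative matrices. The only point worth a word of care is to note that all the quantities involved---$b_{ij}$, $d_{ij}$, $d'_{ij}$, and the diagonal radii---are genuinely nonnegative, so that the monotonicity step is legitimate; this is clear since each is a product of norms or Euclidean operator radii of positive $d$-tuples.
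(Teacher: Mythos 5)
Your proof is correct and takes essentially the same route as the paper: the authors also derive Corollary \ref{cor3} directly from Corollary \ref{cor1} by combining the bound $w_e(\mathbf{T})\leq\|\mathbf{T}\|$ from \eqref{T1} with the entrywise monotonicity of the numerical radius on nonnegative matrices, and obtain the particular case by setting $\alpha=\tfrac12$. The only difference is that you spell out the short justification of the monotonicity step, which the paper merely asserts in the sentence preceding the corollary.
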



Now, it is well known that if $B=[b_{ij}]_{n\times n}$ is an $n\times n$ complex matrix with $b_{ij}\geq0$ for all $i,j=1,2,...,n,$ then $$
w(B)=w\left(\begin{bmatrix}
	\frac{b_{ij}+b_{ji}}{2}
\end{bmatrix}_{n\times n}\right),$$ see in \cite[p. 44]{H}. By employing this argument, the bounds in Corollary \ref{cor1} and \ref{cor3} can be written as in the following remarks, respectively.



\begin{remark}
Let $\mathbb{A}=\begin{bmatrix}
\mathbf{A_{ij}}
\end{bmatrix}_{n\times n}$  be an $n\times n$ operator matrix, where $\mathbf{A_{ij}}\in\mathbb{B}^d(\mathscr{H}),$ $1\leq i,j\leq n.$ Then $$ w_e\left(
\mathbb{A}\right)\leq w \left(\begin{bmatrix}
e_{ij}
\end{bmatrix}_{n\times n}\right),$$ where $ e_{ij}=\begin{cases}w_e(\mathbf{A_{ij}})\,\,\,\textit{ when i=j}\\
 \frac12w_e^\frac12\left(|\mathbf{A_{ji}|^{2\alpha}}+|\mathbf{A_{ij}^*}|^{2(1-\alpha)}\right)w_e^\frac12\left(|\mathbf{A_{ij}|^{2\alpha}}+|\mathbf{A_{ji}^*}|^{2(1-\alpha)}\right)\,\,\,\textit{ when $i\ne j$}.\end{cases}$ \\\\
for all $\alpha,\, 0\leq \alpha \leq 1.$ In particular,  
 $$ w_e\left(
\mathbb{A}\right)\leq w \left(\begin{bmatrix}
e'_{ij}
\end{bmatrix}_{n\times n}\right),$$  where $ e'_{ij}=\begin{cases}w_e(\mathbf{A_{ij}})\,\,\,\textit{ when i=j}\\
\frac12 w_e^\frac12\left(|\mathbf{A_{ji}|}+|\mathbf{A_{ij}^*}|\right)w_e^\frac12\left(|\mathbf{A_{ij}|}+|\mathbf{A_{ji}^*}|\right)\,\,\,\textit{ when $i\ne j$}.\end{cases}$
\label{cor4}\end{remark}

\begin{remark}
Let $\mathbb{A}=\begin{bmatrix}
\mathbf{A_{ij}}
\end{bmatrix}_{n\times n}$ be an $n\times n$ operator matrix, where $\mathbf{A_{ij}}\in\mathbb{B}^d(\mathscr{H}),$ $1\leq i,j\leq n.$ Then $$ w_e\left(
\mathbb{A}\right)\leq w \left(\begin{bmatrix}
f_{ij}
\end{bmatrix}_{n\times n}\right),$$ where $ f_{ij}=\begin{cases}w_e(\mathbf{A_{ij}})\,\,\,\textit{ when i=j}\\
 \frac12\left\||\mathbf{A_{ji}|^{2\alpha}}+|\mathbf{A_{ij}^*}|^{2(1-\alpha)}\right\|^\frac12\left\||\mathbf{A_{ij}|^{2\alpha}}+|\mathbf{A_{ji}^*}|^{2(1-\alpha)}\right\|^\frac12\,\,\,\textit{ when $i\ne j$}
. \end{cases}$\\\\
  for all $\alpha,\, 0\leq \alpha \leq 1.$ 
  In particular,  
  $$ w_e\left(
\mathbb{A}\right)\leq w \left(\begin{bmatrix}
f'_{ij}
\end{bmatrix}_{n\times n}\right),$$ where $ f'_{ij}=\begin{cases}w_e(\mathbf{A_{ij}})\,\,\,\textit{ when i=j}\\
\frac12 \left\||\mathbf{A_{ji}|}+|\mathbf{A_{ij}^*}|\right\|^\frac12\left\||\mathbf{A_{ij}|}+|\mathbf{A_{ji}^*}|\right\|^\frac12\,\,\,\textit{ when $i\ne j$}
 .\end{cases}$
\label{cor5}\end{remark}

Considering $n=2$ in Remark \ref{cor4} and Remark \ref{cor5}, we develop the following bounds for the Euclidean operator radius of $2\times 2 $ operator matrices whose entries are $d$-tuple operators.

\begin{cor}
	Let  $\mathbf{A}$, $\mathbf{B}$,  $\mathbf{C}$,  $\mathbf{D}\in\mathbb{B}^d(\mathscr{H})$. Then $$ w_e\left(\begin{bmatrix}
		\mathbf{A} & \mathbf{B} \\
		\mathbf{C} & \mathbf{D}
	\end{bmatrix}\right)\leq \frac12  \left( w_e(\mathbf{A})+w_e(\mathbf{D})+\sqrt{\left(w_e(\mathbf{A})-w_e(\mathbf{D})\right)^2+\beta^2}\right),$$ where  $\beta= \sqrt{w_e^{}\left((|\mathbf{B}|+|\mathbf{C^*}|)\right)w_e^{}\left((|\mathbf{C}|+|\mathbf{B^*}|)\right)}$.  
\label{cor6}\end{cor}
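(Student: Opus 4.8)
The plan is to apply Remark \ref{cor5} (equivalently Remark \ref{cor4}) with $n=2$ and then to compute the numerical radius of the resulting $2\times 2$ nonnegative matrix explicitly. First I would set up notation: let $\mathbb{A}=\begin{bmatrix}\mathbf{A}&\mathbf{B}\\\mathbf{C}&\mathbf{D}\end{bmatrix}$, so that in the notation of Remark \ref{cor5} the diagonal entries are $f'_{11}=w_e(\mathbf{A})$ and $f'_{22}=w_e(\mathbf{D})$, while the off-diagonal entries are $f'_{12}=f'_{21}=\tfrac12\||\mathbf{C}|+|\mathbf{B^*}|\|^{1/2}\||\mathbf{B}|+|\mathbf{C^*}|\|^{1/2}$ — wait, more care is needed here: the paper's $\beta$ is written in terms of $w_e$, not the operator norm, so the relevant reference is Remark \ref{cor4} with $\alpha=\tfrac12$, giving $f'_{12}=\tfrac12 w_e^{1/2}(|\mathbf{C}|+|\mathbf{B^*}|)\,w_e^{1/2}(|\mathbf{B}|+|\mathbf{C^*}|)=\tfrac{\beta}{2}$. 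So Remark \ref{cor4} yields
\begin{eqnarray*}
w_e\left(\begin{bmatrix}\mathbf{A}&\mathbf{B}\\\mathbf{C}&\mathbf{D}\end{bmatrix}\right)\leq w\left(\begin{bmatrix}w_e(\mathbf{A})&\tfrac{\beta}{2}\\[2pt]\tfrac{\beta}{2}&w_e(\mathbf{D})\end{bmatrix}\right).
\end{eqnarray*}

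Next I would evaluate the right-hand side. The matrix $M=\begin{bmatrix}a&c\\c&b\end{bmatrix}$ with $a=w_e(\mathbf{A})$, $b=w_e(\mathbf{D})$, $c=\beta/2\geq 0$ is a $2\times 2$ real symmetric (hence Hermitian) matrix, so its numerical radius equals its spectral radius, i.e. the larger of the absolute values of its eigenvalues $\lambda_{\pm}=\tfrac{a+b}{2}\pm\tfrac12\sqrt{(a-b)^2+4c^2}$. Since $a,b\geq 0$ and $c\geq 0$, the larger eigenvalue in absolute value is $\lambda_+=\tfrac{a+b}{2}+\tfrac12\sqrt{(a-b)^2+4c^2}$ (one checks $\lambda_+\geq |\lambda_-|$ because $\lambda_++\lambda_-=a+b\geq 0$). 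Substituting $c=\beta/2$ gives $4c^2=\beta^2$, so
\begin{eqnarray*}
w(M)=\tfrac12\left(w_e(\mathbf{A})+w_e(\mathbf{D})+\sqrt{(w_e(\mathbf{A})-w_e(\mathbf{D}))^2+\beta^2}\right),
\end{eqnarray*}
which is exactly the asserted bound. I would state the eigenvalue computation as a one-line lemma-free remark, since the numerical radius of a $2\times 2$ Hermitian matrix being its spectral radius is elementary.

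There is essentially no obstacle here; the only point requiring a little attention is bookkeeping — making sure that the off-diagonal entry produced by Remark \ref{cor4}/\ref{cor5} with the specialization $\alpha=\tfrac12$ matches the quantity $\beta$ as defined in the corollary (in particular that $|\mathbf{A_{12}}|=|\mathbf{B}|$, $|\mathbf{A_{21}^*}|=|\mathbf{C^*}|$, $|\mathbf{A_{21}}|=|\mathbf{C}|$, $|\mathbf{A_{12}^*}|=|\mathbf{B^*}|$, so that $w_e^{1/2}(|\mathbf{A_{21}}|+|\mathbf{A_{12}^*}|)w_e^{1/2}(|\mathbf{A_{12}}|+|\mathbf{A_{21}^*}|)=w_e^{1/2}(|\mathbf{C}|+|\mathbf{B^*}|)w_e^{1/2}(|\mathbf{B}|+|\mathbf{C^*}|)=\beta$), and that the factor $\tfrac12$ from Remark \ref{cor4} is absorbed into writing the off-diagonal entry as $\beta/2$. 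Once that identification is in place, the proof is just the symmetric $2\times 2$ eigenvalue formula. I would therefore present the proof as: (1) invoke Remark \ref{cor4} with $n=2$, $\alpha=\tfrac12$; (2) identify the off-diagonal entry with $\beta/2$; (3) compute the numerical radius of the resulting nonnegative Hermitian $2\times 2$ matrix via its largest eigenvalue; (4) conclude.
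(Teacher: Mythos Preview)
Your proposal is correct and follows exactly the paper's intended route: the paper states that Corollary~\ref{cor6} is obtained by taking $n=2$ in Remark~\ref{cor4}, and you carry this out precisely, including the correct identification of the off-diagonal entry as $\beta/2$ and the explicit computation of the numerical radius of the resulting $2\times 2$ nonnegative Hermitian matrix via its largest eigenvalue. The only cosmetic point is that your initial reference to Remark~\ref{cor5} is a false start you immediately correct; in a final write-up you would simply begin with Remark~\ref{cor4} (the particular case $\alpha=\tfrac12$).
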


\begin{cor}
Let $\mathbf{A}$, $\mathbf{B}$,  $\mathbf{C},$  $\mathbf{D}\in\mathbb{B}^d(\mathscr{H})$. Then $$ w_e\left(\begin{bmatrix}
		\mathbf{A} & \mathbf{B} \\
		\mathbf{C} & \mathbf{D}
	\end{bmatrix}\right)\leq \frac12  \left( w_e(\mathbf{A})+w_e(\mathbf{D})+\sqrt{\left(w_e(\mathbf{A})-w_e(\mathbf{D})\right)^2+\gamma^2}\right),$$ where $\gamma=\left\||\mathbf{B}|+|\mathbf{C^*}|\right\|^\frac12 \left\||\mathbf{C}|+|\mathbf{B^*}|\right\|^\frac12$.  
\label{cor7}\end{cor}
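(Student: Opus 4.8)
The plan is to specialize Remark~\ref{cor5} to $n=2$ and then evaluate the resulting $2\times2$ scalar numerical radius explicitly. Writing $\begin{bmatrix}\mathbf{A}&\mathbf{B}\\\mathbf{C}&\mathbf{D}\end{bmatrix}=\begin{bmatrix}\mathbf{A_{ij}}\end{bmatrix}_{2\times2}$ with $\mathbf{A_{11}}=\mathbf{A}$, $\mathbf{A_{12}}=\mathbf{B}$, $\mathbf{A_{21}}=\mathbf{C}$, $\mathbf{A_{22}}=\mathbf{D}$, the ``in particular'' bound of Remark~\ref{cor5} (the $f'_{ij}$ version, i.e.\ the case $\alpha=\tfrac12$) gives
$$ w_e\left(\begin{bmatrix}\mathbf{A}&\mathbf{B}\\\mathbf{C}&\mathbf{D}\end{bmatrix}\right)\le w\left(\begin{bmatrix} w_e(\mathbf{A}) & \tfrac12\gamma\\ \tfrac12\gamma & w_e(\mathbf{D})\end{bmatrix}\right).$$
Here I first record the bookkeeping that makes the off-diagonal entries equal: for $(i,j)=(1,2)$ one gets $\tfrac12\||\mathbf{A_{21}}|+|\mathbf{A_{12}^*}|\|^{1/2}\||\mathbf{A_{12}}|+|\mathbf{A_{21}^*}|\|^{1/2}=\tfrac12\||\mathbf{C}|+|\mathbf{B^*}|\|^{1/2}\||\mathbf{B}|+|\mathbf{C^*}|\|^{1/2}=\tfrac12\gamma$, and for $(i,j)=(2,1)$ the same two factors occur in the opposite order, again giving $\tfrac12\gamma$. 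Thus the matrix on the right is real, symmetric, and entrywise non-negative.

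It then remains to compute $w\!\left(\begin{bmatrix} p & q\\ q & r\end{bmatrix}\right)$ for $p=w_e(\mathbf{A})\ge0$, $r=w_e(\mathbf{D})\ge0$, $q=\tfrac12\gamma\ge0$. Since this matrix is Hermitian, its numerical radius equals its spectral radius, i.e.\ the larger of $\bigl|\tfrac{p+r}{2}\pm\tfrac12\sqrt{(p-r)^2+4q^2}\,\bigr|$; because $p,r\ge0$, the eigenvalue $\tfrac{p+r}{2}+\tfrac12\sqrt{(p-r)^2+4q^2}$ is non-negative and dominates the modulus of the other, so
$$ w\!\left(\begin{bmatrix} p & q\\ q & r\end{bmatrix}\right)=\frac{p+r}{2}+\frac12\sqrt{(p-r)^2+4q^2}=\frac12\Big(w_e(\mathbf{A})+w_e(\mathbf{D})+\sqrt{(w_e(\mathbf{A})-w_e(\mathbf{D}))^2+\gamma^2}\,\Big),$$
where the last equality uses $4q^2=\gamma^2$. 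Chaining this with the previous display gives the asserted bound.

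I do not anticipate a genuine obstacle: essentially all the work is already packaged in Remark~\ref{cor5}, and what is left is the textbook evaluation of the numerical radius of a $2\times2$ Hermitian matrix. The only two points that deserve a moment's care are (a) checking that both off-diagonal entries of the reduced matrix are $\tfrac12\gamma$, so that the matrix is symmetric and the identity ``numerical radius $=$ spectral radius'' applies, and (b) the sign check that the \emph{larger} eigenvalue is the one realizing the spectral radius, which is exactly where the non-negativity $w_e(\mathbf{A}),w_e(\mathbf{D})\ge0$ enters.
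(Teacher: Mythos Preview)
Your proof is correct and follows exactly the approach indicated in the paper, which derives this corollary by specializing Remark~\ref{cor5} (the $\alpha=\tfrac12$ case) to $n=2$ and then evaluating the numerical radius of the resulting $2\times 2$ real symmetric matrix. The paper leaves the latter computation implicit, and you have filled in precisely those details, including the bookkeeping that both off-diagonal entries equal $\tfrac12\gamma$ and the eigenvalue/sign analysis.
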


In \cite{SPK1}, we proved that $w_e\left(\mathbf{A}^n\right)\leq \sqrt{d}\ {w_e^n\left(\mathbf{A}\right)}$ for every $\mathbf{A}\in\mathbb{B}^d(\mathscr{H})$ and for every positive integer $n.$
Using this power inequality, we develop an upper bound for the joint numerical radius of the product of two $d$-tuple operators.

\begin{theorem}
If $\mathbf{B}$, $\mathbf{C}\in\mathbb{B}^d(\mathscr{H})$, then $$w_e(\mathbf{BC})\leq \frac{\sqrt{d}}{4} w_e\left((|\mathbf{B}|+|\mathbf{C^*}|)\right)w_e\left((|\mathbf{C}|+|\mathbf{B^*}|)\right)
. $$
\label{th15}\end{theorem}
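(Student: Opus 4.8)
The plan is to introduce the auxiliary $2\times 2$ operator matrix
$$\mathbb{T}=\begin{bmatrix}\mathbf{0} & \mathbf{B}\\ \mathbf{C} & \mathbf{0}\end{bmatrix}\in\mathbb{B}^d(\mathscr{H}\oplus\mathscr{H}),$$
whose componentwise square (using $\mathbf{XY}=(X_1Y_1,\dots,X_dY_d)$ entrywise) is the block-diagonal $d$-tuple $\mathbb{T}^2=\begin{bmatrix}\mathbf{BC} & \mathbf{0}\\ \mathbf{0} & \mathbf{CB}\end{bmatrix}$. First I would note that restricting the defining supremum of $w_e(\mathbb{T}^2)$ to unit vectors of the form $(x,0)\in\mathscr{H}\oplus\mathscr{H}$ immediately gives $w_e(\mathbf{BC})\leq w_e(\mathbb{T}^2)$. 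Next, since $w_e$ satisfies the power inequality $w_e(\mathbf{A}^n)\leq\sqrt{d}\,w_e^{\,n}(\mathbf{A})$ recalled just before the statement (applied with $n=2$ on the space $\mathscr{H}\oplus\mathscr{H}$), one has $w_e(\mathbb{T}^2)\leq\sqrt{d}\,w_e^{\,2}(\mathbb{T})$.

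It then remains only to estimate $w_e(\mathbb{T})$, and for this I would apply Corollary \ref{cor6} with $\mathbf{A}=\mathbf{D}=\mathbf{0}$. Since $w_e(\mathbf{0})=0$, the corollary collapses to
$$w_e(\mathbb{T})\leq\frac12\sqrt{w_e\!\left(|\mathbf{B}|+|\mathbf{C^*}|\right)w_e\!\left(|\mathbf{C}|+|\mathbf{B^*}|\right)},$$
so that $w_e^{\,2}(\mathbb{T})\leq\frac14\,w_e(|\mathbf{B}|+|\mathbf{C^*}|)\,w_e(|\mathbf{C}|+|\mathbf{B^*}|)$. Chaining the three inequalities $w_e(\mathbf{BC})\leq w_e(\mathbb{T}^2)\leq\sqrt{d}\,w_e^{\,2}(\mathbb{T})\leq\frac{\sqrt d}{4}\,w_e(|\mathbf{B}|+|\mathbf{C^*}|)\,w_e(|\mathbf{C}|+|\mathbf{B^*}|)$ yields exactly the asserted bound.

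The argument is essentially a bookkeeping combination of three facts established earlier, so there is no serious analytic obstacle. The points needing a little care are: (i) verifying that componentwise multiplication of $\mathbb{T}$ really produces the block-diagonal tuple $\mathbb{T}^2$ with diagonal entries $\mathbf{BC}$ and $\mathbf{CB}$; and (ii) making sure the power inequality from \cite{SPK1} is invoked on the enlarged Hilbert space $\mathscr{H}\oplus\mathscr{H}$, not on $\mathscr{H}$. Should one wish to avoid Corollary \ref{cor6}, an alternative is to bound $w_e(\mathbb{T})$ directly via Theorem \ref{th7} or the off-diagonal estimates of Section 3, but routing through Corollary \ref{cor6} is the cleanest.
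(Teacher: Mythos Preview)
Your proposal is correct and follows essentially the same route as the paper's proof: introduce the off-diagonal $2\times 2$ tuple $\mathbb{T}$, use $w_e(\mathbf{BC})\leq w_e(\mathbb{T}^2)$, apply the power inequality $w_e(\mathbb{T}^2)\leq\sqrt{d}\,w_e^2(\mathbb{T})$, and then invoke Corollary~\ref{cor6} with $\mathbf{A}=\mathbf{D}=\mathbf{0}$. The only cosmetic difference is that the paper phrases the first step via \cite[Lemma~3.1]{SPK1} (identifying $w_e$ of the block-diagonal tuple with $\max\{w_e(\mathbf{BC}),w_e(\mathbf{CB})\}$), whereas you obtain the needed inequality directly by restricting to vectors $(x,0)$.
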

\begin{proof}
Following \cite[Lemma 3.1]{SPK1}, 
 we have \begin{eqnarray*}
  w_e(\mathbf{BC}) \leq  \max\{w_e(\mathbf{BC}),w_e(\mathbf{CB})\}
  &=&w_e \left(\begin{bmatrix}
 \mathbf{BC} & 0 \\
0 &\mathbf{CB} 
\end{bmatrix}\right)\\ &=&w_e\left(\begin{bmatrix}
0 & \mathbf{B} \\
\mathbf{C} & 0
\end{bmatrix}^2\right)\nonumber\\
&\leq&\sqrt{d}\ w_e^2\left(\begin{bmatrix}
0 & \mathbf{B} \\
\mathbf{C} & 0
\end{bmatrix}\right).\end{eqnarray*}
 Using Corollary \ref{cor6}, we have 
 $$ w_e(\mathbf{BC})\leq \frac{\sqrt{d}}{4} w_e((|\mathbf{B}|+|\mathbf{C^*}|))w_e((|\mathbf{C}|+|\mathbf{B^*}|)). $$ 
\end{proof}  

Finally, using the above theorem we develop an upper bound for the Euclidean operator radius of $d$-tuple operators.

\begin{cor}
If $\mathbf{A} \in {\mathbb{B}^d(\mathscr{H})}$, then 
$$ w_e(\mathbf{A})\leq \frac{\sqrt{d}}{4} w_e\left(|\mathbf{A}|^{1-t}+|\mathbf{A}|^t\right)w_e\left(|\mathbf{A}|^t+|\mathbf{A^*}|^{1-t}\right),$$  for all $t,\ 0\leq t\leq 1.$
\label{theo1}\end{cor}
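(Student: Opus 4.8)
The plan is to derive this directly from Theorem \ref{th15} via the polar decomposition, exactly in the spirit of Corollaries \ref{th2}, \ref{th3} and \ref{th4}. For each $k=1,2,\ldots,d$, write $A_k=U_k|A_k|$ for the polar decomposition, and set $B_k=U_k|A_k|^{1-t}$ and $C_k=|A_k|^t$. Then by the functional calculus $B_kC_k=U_k|A_k|^{1-t}|A_k|^t=U_k|A_k|=A_k$, so that $\mathbf{BC}=\mathbf{A}$ for the $d$-tuples $\mathbf{B}=(B_1,\ldots,B_d)$ and $\mathbf{C}=(C_1,\ldots,C_d)$.

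Next I would compute the four moduli appearing on the right-hand side of Theorem \ref{th15}. Since $|A_k|^t$ and $|A_k|^{1-t}$ are positive, $|C_k|=|C_k^*|=|A_k|^t$ and $|B_k|=\bigl(|A_k|^{1-t}U_k^*U_k|A_k|^{1-t}\bigr)^{1/2}=|A_k|^{1-t}$, while $|B_k^*|=\bigl(U_k|A_k|^{2(1-t)}U_k^*\bigr)^{1/2}=|A_k^*|^{1-t}$, the last equality using $U_k|A_k|^{2(1-t)}U_k^*=|A_k^*|^{2(1-t)}$ as already noted in the proof of Corollary \ref{th2}. Hence $|\mathbf{B}|+|\mathbf{C^*}|=|\mathbf{A}|^{1-t}+|\mathbf{A}|^t$ and $|\mathbf{C}|+|\mathbf{B^*}|=|\mathbf{A}|^t+|\mathbf{A^*}|^{1-t}$.

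Finally, substituting $\mathbf{B}$ and $\mathbf{C}$ into Theorem \ref{th15} and using the identifications above yields
$$w_e(\mathbf{A})=w_e(\mathbf{BC})\leq \frac{\sqrt{d}}{4}\,w_e\bigl(|\mathbf{A}|^{1-t}+|\mathbf{A}|^t\bigr)\,w_e\bigl(|\mathbf{A}|^t+|\mathbf{A^*}|^{1-t}\bigr),$$
which is the asserted bound. There is no genuine obstacle: the only points requiring care are the routine polar-decomposition identities for $|B_k|$ and $|B_k^*|$ (handled exactly as in Corollaries \ref{th2}--\ref{th4}) and, at the endpoints $t=0,1$, interpreting $|A_k|^0$ via the partial isometry $U_k$ so that $B_kC_k=A_k$ still holds; for $0<t<1$ everything is immediate from the functional calculus.
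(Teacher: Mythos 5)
Your proposal is correct and is essentially identical to the paper's own proof: the authors likewise take the polar decomposition $A_k=U_k|A_k|$, set $B_k=U_k|A_k|^{1-t}$ and $C_k=|A_k|^t$, and apply Theorem \ref{th15}. Your version is simply more explicit about the identities $|B_k|=|A_k|^{1-t}$, $|B_k^*|=|A_k^*|^{1-t}$, $|C_k|=|C_k^*|=|A_k|^t$, which the paper leaves implicit.
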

 \begin{proof}
 Suppose $\mathbf{A}=(A_1,A_2, \ldots,A_d)$, $\mathbf{B}=(B_1,B_2, \ldots,B_d)$,  $\mathbf{C}=(C_1,C_2, \ldots,C_d)\in \mathbb{B}^d(\mathscr{H}).$
 Let $A_k=U_k|A_k|$ be the polar decomposition of $A_k$ for all $k=1,2,\ldots,d.$ Considering $B_k=U_k|A_k|^{1-t}$ and $C_k=|A_k|^t$ in Theorem \ref{th15}, we get the desired result. 
 \end{proof}
 
\noindent  \textbf{Statements of declaration.}\\
The authors have no relevant financial or non-financial interests to disclose.

\bibliographystyle{amsplain}

\begin{thebibliography}{99}
	



\bibitem{BDMP} P. Bhunia, S.S. Dragomir, M.S. Moslehian, K. Paul; Lectures on Numerical Radius Inequalities, Infosys Science Foundation Series in Mathematical Sciences, Springer, Cham, 2022. xii+209 pp. ISBN: 978-3-031-13669-6; 978-3-031-13670-2

\bibitem{Bhunia1} P. Bhunia; Sharper bounds for the numerical radius of $\lowercase{n}\times \lowercase{n}$ operator matrices. https://arxiv.org/abs/2303.10392v1

\bibitem{Bhunia2} P. Bhunia; Improved bounds for the numerical radius via polar decomposition of operators. https://arxiv.org/abs/2303.03051






\bibitem{Bhunia_LAMA_2022} P. Bhunia, K. Paul; Some improvements of numerical radius inequalities of operators and operator matrices, Linear Multilinear Algebra 70 (2022), no. 10, 1995--2013.

\bibitem{Bhunia_LAA_2021} P. Bhunia, K. Paul;  Development of inequalities and characterization of equality conditions for the numerical radius, Linear Algebra Appl. 630 (2021), 306--315. 

\bibitem{Bhunia_AM_2021}  P. Bhunia, K. Paul; Furtherance of numerical radius inequalities of Hilbert space operators, Arch. Math. (Basel) 117 (2021), no. 5, 537--546.

\bibitem{Bhunia_RIM_2021} P. Bhunia, K. Paul; Proper improvement of well-known numerical radius inequalities and their applications, Results Math. 76 (2021), no. 4, Paper No. 177, 12 pp. 

\bibitem{Bhunia_BSM_2021} P. Bhunia, K. Paul; New upper bounds for the numerical radius of Hilbert space operators, Bull. Sci. Math. 167 (2021), Paper No. 102959, 11 pp. 

\bibitem{Bhunia3} P. Bhunia; Numerical radius inequalities of bounded linear operators and $(\alpha,\beta)$-normal operators. https://arxiv.org/abs/2301.03877


\bibitem{Bhunia_LAA_2019}  P. Bhunia, S. Bag, K. Paul; Numerical radius inequalities and its applications in estimation of zeros of polynomials, Linear Algebra Appl. 573 (2019), 166--177.





\bibitem{B} M.L. Buzano; Generalizzatione della disuguaglianza di Cauchy-Schwarz, Rend. Semin. Mat. Univ. Politech. Torino 31(1971/73) (1974) 405--409. 





\bibitem{SD} S.S. Dragomir; Some inequalities for the Euclidean operator radius of two operators in Hilbert spaces, Linear Algebra Appl. 419 (2006), 256--264.

\bibitem{H} R.A. Horn, C.R. Johnson; Topics in Matrix Analysis, Cambridge University Press,
Cambridge, 1991.



\bibitem{SPK1} S. Jana, P. Bhunia, K. Paul; Euclidean operator radius inequalities of $d$-tuple operators and operator matrices, (2023) arXiv:2304.08033v1.

\bibitem{SPK} S. Jana, P. Bhunia, K. Paul; Euclidean operator radius inequalities of a pair of bounded linear operators and their applications. Bull. Braz. Math. Soc. (N.S.) 54 (2023), no. 1, Paper No. 1, 14 pp.





\bibitem{Kittaneh_LAMA_2023} F. Kittaneh, H.R. Moradi, M. Sababheh; Sharper bounds for the numerical radius, Linear Multilinear Algebra, (2023). 
https://doi.org/10.1080/03081087.2023.2177248

\bibitem{Kittaneh_STD_2005}  F. Kittaneh; Numerical radius inequalities for Hilbert space operators, Studia Math. 168 (2005), no. 1, 73--80.

\bibitem{Kittaneh_2003} F. Kittaneh; Numerical radius inequality and an estimate for the numerical radius of the Frobenius companion matrix, Studia Math. 158 (2003), no. 1, 11--17.

\bibitem{FK} F. Kittaneh; Notes on some inequalities for Hilbert space operators, {Publ. Res. Inst. Math. Sci.} 24 (1988), no. 2, 283--293.





\bibitem{MSS} M.S. Moslehian, M. Sattari, K. Shebrawi; Extensions of Euclidean operator radius inequalities, Math. Scand. 120 (2017), no. 1, 129-144.

\bibitem{M} C.A. McCarthy; Cp, Israel J. Math. 5 (1967), 249–271.



\bibitem{P} G. Popescu; Unitary invariants in multivariable operator theory, Mem. Amer. Math. Soc. 200 (2009), no. 941, vi+91 pp. ISBN: 978-0-8218-4396-3

\bibitem{Sahoo} S. Sahoo, N.C. Rout, M. Sababheh; Some extended numerical radius inequalities, Linear Multilinear Algebra 69 (2021), no. 5, 907--920.


\bibitem{VK} M.P. Vasi\'c,  D.J. Ke\^cki\'c; Some inequalities for complex numbers, Math. Balkanica 1 (1971) 282--286.


\bibitem{GW} P.Y. Wu, H.-L. Gau; Numerical ranges of Hilbert space operators, Encyclopedia of Mathematics and its Applications, 179. Cambridge University Press, Cambridge, 2021. xviii+483 pp. ISBN: 978-1-108-47906-6 47-02 



\end{thebibliography}

\end{document}